\numberwithin{equation}{section}
\theoremstyle{plain}
\newtheorem{theorem}{Theorem}[section]
\newtheorem{proposition}[theorem]{Proposition}
\newtheorem{lemma}[theorem]{Lemma}
\theoremstyle{remark}
\theoremstyle{definition}
\newcommand{\R}{\mathbb{R}}
\newcommand{\N}{\mathbb{N}}
\newcommand{\xvec}{\underline{x}}
\newcommand{\yvec}{\underline{y}}
\newcommand{\vvec}{\underline{v}}
\newcommand{\zvec}{\underline{z}}
\newcommand{\tvec}{\underline{t}}
\newcommand{\wvec}{\underline{w}}
\newcommand{\ibar}{\bar{\imath}}
\newcommand{\jbar}{\bar{\jmath}}
\renewcommand{\geq}{\geqslant}
\renewcommand{\leq}{\leqslant}
\DeclareMathOperator{\proj}{proj}
\DeclarePairedDelimiter{\abs}{\lvert}{\rvert}
\DeclarePairedDelimiter{\norm}{\lVert}{\rVert}
\begin{document}

\title{On the dimension of planar self-affine sets with non-invertible maps} % Find a better title!

\author{Bal\'azs B\'ar\'any}
\address[Bal\'azs B\'ar\'any]
        {Department of Stochastics, Institute of Mathematics, Budapest University of Technology and Economics, M\H{u}egyetem rkp. 3., H-1111 Budapest, Hungary}
\email{balubsheep@gmail.com}

\author{Viktor K\"ortv\'elyesi}
\address[Viktor K\"ortv\'elyesi]
        {Department of Stochastics, Institute of Mathematics, Budapest University of Technology and Economics, M\H{u}egyetem rkp. 3., H-1111 Budapest, Hungary}
\email{vkortvelyesi@gmail.com }

%\thanks{}
\subjclass[2000]{Primary 28A80; Secondary 28A78} % 37C45, 37D35.}
\keywords{Self-affine set, Hausdorff dimension, iterated function systems}
\date{\today}
\thanks{The research was supported by the grants the grants NKFI FK134251, K142169, and the grant NKFI KKP144059 ”Fractal geometry and applications”.}

\begin{abstract}
  In this paper, we study the dimension of planar self-affine sets, of which generating iterated function system (IFS) contains non-invertible affine mappings. We show that under a certain separation condition the dimension equals to the affinity dimension for a typical choice of the linear-parts of the non-invertible mappings, furthermore, we show that the dimension is strictly smaller than the affinity dimension for certain choices of parameters.
\end{abstract}

\maketitle

\section{Introduction}
	
	Let $\mathcal{F}=\{f_i(\xvec)=A_i\xvec+\tvec_i\}_{i\in I}$ be a finite collection of affine self-maps of $\R^d$, where $A_i$ is a $d\times d$ matrix and $\tvec_i\in\R^d$, called iterated function system (IFS). Let us assume throughout the paper that $\|A_i\|<1$ for every $i\in I$, where $\|\cdot\|$ denotes the usual matrix norm induced by the Euclidean-norm on $\R^d$. The well-known theorem of Hutchinson \cite{Hutchinson1981} states that there exists a unique non-empty compact set $\Lambda$ such that it is invariant with respect to the IFS $\mathcal{F}$. That is,
	\begin{gather*}
		\Lambda = \bigcup\limits_{i\in I} f_i(\Lambda).
	\end{gather*}
	We call the set $\Lambda$ self-affine set or the attractor of the IFS $\mathcal{F}$. In the special case, when the affine mappings are similarity transformations, we call the set $\Lambda$ self-similar.
	
	The dimension theory of self-affine sets has been widely studied in the past decades. Throughout the paper, we will denote the Haussdorff dimension of a set $A\subset\R^d$ by $\dim_H(A)$ and the box-counting and the upper box-counting dimension by $\dim_B(A)$ and $\overline{\dim}_B(A)$ respectively. For the definition and basic properties, we direct the reader to Falconer \cite{Falconer1990}. 
	
	In the case, when the affine mappings are similarities and the IFS satisfies the strong separation condition, i.e. 
	$$
	f_i(\Lambda)\cap f_j(\Lambda)=\emptyset\text{ for every }i\neq j,
	$$
	Hutchinson \cite{Hutchinson1981} showed that the Hausdorff and box dimension of $\Lambda$ equals to the similarity dimension. Determining the dimension becomes significantly harder when overlaps occur between the cylinder sets $f_i(\Lambda)$, $i\in I$. This problem was studied in several papers, like B\'ar\'any \cite{barany}, Hochman \cite{Hochman2014, hochmanrd}, Simon and Solomyak \cite{simonsolomyak}, Solomyak \cite{Solomyak1998}, etc. for typical systems in some proper sense. In general, the similarity dimension serves always as an upper bound of the box dimension.
	
	Another challenging problem is, when there is a strict affinity between the maps of the IFS $\mathcal{F}$, that is, there exists a matrix with at least two different eigenvalues in modulus. Falconer \cite{Falconer1988} generalized the similarity dimension for that case, called the affinity dimension, and showed that if all the maps are invertible then it is an upper bound of the box dimension of the attractor. Furthermore, Falconer \cite{Falconer1988} showed, which was later extended by Solomyak~\cite{Solomyak1998}, that if $\|A_i\|<1/2$ then for Lebesgue typical translation parameters the affinity dimension equals to the Hausdorff dimension. 
	
	The dimension theory of self-affine sets has been widely studied in the recent years, see B\'ar\'any, Hochman and Rapaport \cite{BaranyHochmanRapaport2019}, B\'ar\'any, K\"aenm\"aki and Koivusalo \cite{BaranyKaenmakiKoivusalo2017}, Falconer and Kempton \cite{falconerkemption18}, Hochman and Rapaport \cite{HochmanRapaport2021}, Rapaport \cite{Rapaport18}. All of these papers were considering systems, where the affine mappings are invertible. The first steps in the direction of considering non-invertible mappings has been made recently by K\"aenm\"aki and Nissinen \cite{kaenmaki2022non}. They studied the relation between the dimension of the attractor and the dimension of the self-affine set formed by the invertible mappings of the IFS for typical and for separated systems.
	
	This paper is devoted to generalise the result of B\'ar\'any, Hochman and Rapaport \cite{BaranyHochmanRapaport2019} for planar self-affine sets which defining IFS contains non-invertible mappings. We will show that for typical choice of parameters the Hausdorff dimension equals to the affinity dimension, but there is a relatively large set of exceptions.  Before we state our main theorem in details, we need to introduce some notations and definitions. 
	
	Let us denote by $\mathbb{S}^{d-1}$ the unit sphere on $\R^d$, and let us denote by $$\mathbb{T}^d=\overbrace{\mathbb{S}^1\times\cdots\times\mathbb{S}^1}^{d\text{-times}}$$ the $d$-dimensional torus. Every planar, contracting matrix $A$ with $\mathrm{rank}(A)=1$ can be represented as $A=\rho\vvec\wvec^T$, where $\rho\in(0,1)$ and $\vvec,\wvec\in\mathbb{S}^1$. In particular, $\vvec$ is the unit vector generating the image space and $\wvec$ is the unit vector generating the kernel of the matrix $A$. Note that this representation is not unique.
	
	For a $2\times2$ matrix $A$, denote by $\alpha_1(A)\geq\alpha_2(A)\geq0$ the singular values. For every $t\geq0$, let $\varphi^t(A)$ be the singular value function defined as
	$$
	\varphi^t(A)=\begin{cases}
		\alpha_1(A)^t & \text{if }0\leq t\leq 1,\\
		\alpha_1(A)\alpha_2(A)^{t-1} & \text{if }1<t\leq2,\\
		(\alpha_1(A)\alpha_2(A))^{t/2} & \text{if }t>2.
	\end{cases}
	$$
	Note that if $A$ has rank one then $\varphi^t(A)=0$ for every $t>1$. 
	
	Let $I$ and $J$ be finite sets of indices and for every $i\in I\cup J$ let $f_i(\xvec)=A_i\xvec+t_i$ be an affine map such that $\|A_i\|<1$ for every $i\in I\cup J$,  $\mathrm{rank}(A_i)=2$ for every $i\in I$ and $\mathrm{rank}(A_i)=1$ for every $i\in J$.  Let us consider the following parametrized family of affine IFS 
	\begin{equation}\label{eq:IFSdef}
	\mathcal{F}_{\mathsf{w}}=\{f_i(\xvec)=A_i\xvec+\tvec_i\}_{i\in I}\bigcup\{f_i(\xvec)=\rho_i\vvec_i\wvec_i^T\xvec+\tvec_i\}_{i\in J},
	\end{equation}
	where $\mathsf{w}=(\wvec_j)_{j\in J}\in\left(\mathbb{S}^1\right)^{\#J}=\mathbb{T}^{\#J}$ is considered as the parameters while all the other quantities are fixed. We assume throughout the paper that $\#J\geq1$ and $\#(I\cup J)\geq2$ to avoid trivial cases. Let us denote the attractor of $\mathcal{F}_{\mathsf{w}}$ by $\Lambda_{\mathsf{w}}$. We define the affinity dimension $s(\mathcal{F}_{\mathsf{w}})$ of the self-affine IFS as
	\begin{gather}\label{CH3_def_aff_dim}
		s(\mathcal{F}_{\mathsf{w}}) =\min\left\{2, \inf \left\{ t \geq 0: \, \sum\limits_{n=1}^{\infty} \smashoperator[r]{\sum\limits_{i_1,\ldots,i_n\in I\cup J}} \varphi^t(A_{i_1}\cdots A_{i_n}) < \infty \right\}\right\}.
	\end{gather}
	The affinity dimension serves as a natural upper bound for the upper box-counting dimension, and in particular for the Hausdorff dimension, of the attractor in the non-invertible case too, see K\"aenm\"aki and Nissinen \cite{kaenmaki2022non}. 
	
	Let us define the affinity dimension of the sub-system formed by the invertible mappings $\mathcal{F}_{\mathrm{reg}}=\{f_i\}_{i\in I}$ as 
	$$
	s_{\mathrm{reg}}:=\min\left\{2,\inf\left\{s\geq0:\sum_{n=1}^\infty \smashoperator[r]{\sum_{i_1,\ldots,i_n\in I}}\varphi^s(A_{i_1}\cdots A_{i_n})<\infty\right\}\right\}.
	$$

	Let us observe that if $s(\mathcal{F}_{\mathsf{w}})>1$ then $s(\mathcal{F}_{\mathsf{w}})=s_{\mathrm{reg}}$. Furthermore, if $s_{\mathrm{reg}}\leq1$ then $s(\mathcal{F}_{\mathsf{w}})\leq1$ as well. The following was shown by K\"aenm\"aki and Nissinen \cite[Theorem~1.1(1)]{kaenmaki2022non}: suppose that $s_{\mathrm{reg}}\geq1$ and suppose that the IFS $\mathcal{F}_{\mathrm{reg}}=\{f_i\}_{i\in I}$ satisfies the strong separation condition and the matrices $\{A_i\}_{i\in I}$ do not preserve any finite collection of the proper subspaces of $\R^2$ then $\dim_H(\Lambda_{\mathsf{w}})=\dim_B(\Lambda_{\mathsf{w}})=s(\mathcal{F}_{\mathsf{w}})=s_{\mathrm{reg}}$ for every $\mathsf{w}\in\mathbb{T}^{\#J}$. Hence, in the remaining part of the paper we assume that $s_{\mathrm{reg}}<1$.
	
	We say that satisfies the convex separation condition uniformly, if there exists a convex compact set $U\subset\R^2$ such that
	\[
	\begin{split}
	&\bigcup_{i\in I\cup J} f_i(U)\subseteq U\text{ for every }\mathsf{w}\in\mathbb{T}^{\#J}\text{ and }\\
	&\left(\bigcup_{\mathsf{w}\in\mathbb{T}^{\#J}}f_i(U)\right)\bigcap \left(\bigcup_{\mathsf{w}\in\mathbb{T}^{\#J}}f_j(U)\right)=\emptyset\text{ for every }i\neq j.
	\end{split}
	\]
	In the second part of the assumption, the image $f_i(U)$ depends on $\mathsf{w}\in\mathbb{T}^{\#J}$ if and only if $i\in J$, and in this case only on the corresponding coordinate of $\mathsf{w}$. In particular, if $i\in J$ then $f_i(U)$ is a line-segment which is parallel to $\mathrm{Im}(A_i)$, and $\bigcup_{\mathsf{w}\in\mathbb{T}^{\#J}}f_i(U)$ is the smallest line segment parallel to $\mathrm{Im}(A_i)$ containing $f_i(U)$ for every $\underline{w}_i\in\mathbb{S}^1$. Note that because of non-invertibility, the convex separation condition does not imply that the second and higher iterates do not contain overlaps.
	%{\color{red} For an example of such system, see Figure~\ref{fig:example1}.} 
    For an example of such system, see Figure~\ref{fig:example}, which IFS consists of 3 invertible and 2 non-invertible mappings.

 Furthermore, we say that the IFS $\mathcal{F}_{\mathrm{reg}}=\{f_i(\xvec)=A_i\xvec+\tvec_i\}_{i\in I}$ is irreducible if there is no proper subspace $V$ of $\R^2$ such that $V$ is preserved by the all matrices $A_i$ for $i\in I$.
 
  \begin{figure}[H]
 	\begin{tikzpicture}[scale=0.4]
 		
 		\draw[rotate=30, line width= 0.6] (-1.3,0.3) ellipse (200pt and 200pt);
 		\draw coordinate (B1) at ( -1.6, 6.5);
 		\node at (B1) [font=\large, above = 1mm] {$U$};
 		
 		\draw coordinate (F1) at ( -6.8, 0.5);
 		\node at (F1) [font=\large, above = 1mm] {$f_1(U)$};
 		\draw coordinate (F2) at ( 2.5, 3);
 		\node at (F2) [font=\large, above = 1mm] {$f_2(U)$};
 		\draw coordinate (F3) at ( 2, -7);
 		\node at (F3) [font=\large, above = 1mm] {$f_3(U)$};
 		\draw coordinate (F4) at ( 3.5, -3);
 		\node at (F4) [red, font=\large, above = -3mm] {$\bigcup\limits_{\mathsf{w}}f_4(U)$};
 		\draw coordinate (F5) at ( -2, 3.5);
 		\node at (F5) [red, font=\large, above = 1mm] {$\bigcup\limits_{\mathsf{w}}f_5(U)$};
 		\draw coordinate (F6) at ( -4.7, 3.8);
 		%    \node at (F6) [font=\small, above = 1mm] {$f_5\circ f_1(U)$};
 		
 		% F_{i}(S_1)
 		\draw[rotate=60] (-2.5,3.5) ellipse (110pt and 70pt);
 		\draw[rotate=10] (-1.3,-4) ellipse (70pt and 40pt);
 		\draw[rotate=30] (2.3,0) ellipse (80pt and 50pt);
 		% F_{1}F_{i}(S_1)
 		\draw[rotate=40] (-2.5,3.5) ellipse (40pt and 30pt);
 		\draw[rotate=10] (-6,-1) ellipse (30pt and 20pt);
 		\draw[rotate=30] (-4,0) ellipse (20pt and 15pt);
 		% F_{2}F_{i}(S_1)
 		\draw[rotate=40] (3.5,-1.3) ellipse (20pt and 10pt);
 		\draw[rotate=10] (2,1) ellipse (30pt and 20pt);
 		\draw[rotate=30] (0.5,0) ellipse (20pt and 15pt);
 		% F_{3}F_{i}(S_1)
 		\draw[rotate=40] (-3,-3) ellipse (20pt and 10pt);
 		\draw[rotate=10] (-2,-3.5) ellipse (15pt and 10pt);
 		\draw[rotate=30] (-0.8,-4) ellipse (10pt and 5pt);
 		
 		% Narancs határvonalak
 		%\draw[dashed,orange] (-5.7,6) -- (-0.6,-6);
 		%\draw[dashed,orange] (-8,-4.7) -- (5,6); 
 		
 		%Szinguláris cilinderek
 		\draw[red,|-|,line width=0.3mm] (-6,4) -- (-2.2,3.7);
 		\draw[red,|-|,line width=0.3mm] (-1,-2) -- (3,-1);
 		
 		\draw[|-|,line width=0.15mm] (-5.4,4) -- (-4,3.9);
 		\draw[|-|,line width=0.15mm] (-4.2,3.9) -- (-3,3.8);
 		\draw[|-|,line width=0.15mm] (-3.2,3.8) -- (-2.6,3.7);
 		
 		\draw[|-|,line width=0.15mm] (-0.5,-1.9) -- (1,-1.5);
 		\draw[|-|,line width=0.15mm] (0.8,-1.6) -- (1.7,-1.3);
 		\draw[|-|,line width=0.15mm] (1.5,-1.4) -- (2.2,-1.2);
 		\draw[red,|-|,line width=0.15mm] (-6,-0.5) -- (-4,-1);% node[right] {$F_{\bar{\jmath}_1}$};
 		\draw[red,|-|,line width=0.15mm] (-3,-0.7) -- (-2.1,1.2);
 		\draw[red,|-|,line width=0.15mm] (1.5,0) -- (2.8,0.6);% node[right] {$F_{\bar{\jmath}_1}$};
 		\draw[red,|-|,line width=0.15mm] (1.5,2.4) -- (3.9,2.2);
 		\draw[red,|-|,line width=0.15mm] (0.3,-3.2) -- (1,-4.8);% node[right] {$F_{\bar{\jmath}_1}$};
 		\draw[red,|-|,line width=0.15mm] (-2.5,-4.4) -- (-0.5,-5.2);
 		
 		% Narancs terület határpontjai
 		%\draw coordinate (C) at (5.01,6);
 		%\draw coordinate (D) at (2,1.06);
 		%\draw coordinate (E) at (2.6,-2.9);
 		%\draw coordinate (F) at (7.1,6);
 		%\draw coordinate (G) at (4.24,3.22);
 		%\fill[red] (G) circle (1pt);
 		%\draw[dashed, orange, fill = orange!35, opacity=0.3] 
 		%(-5.7,6) %node[anchor=north]{$A$}
 		%-- (-2.925,-0.5) %node[anchor=north]{$C$}
 		%-- (5,6);
 		%\draw[dashed, orange, fill = orange!35, opacity=0.3] 
 		%(-8,-6) %node[anchor=north]{$A$}
 		%--(-8,-4.7)
 		%-- (-2.925,-0.5) %node[anchor=north]{$C$}
 		%-- (-0.6,-6);
 	\end{tikzpicture}
 	\caption{The first and second level cylinder sets of an IFS\linebreak $\mathcal{F} = \{f_i(\xvec)=A_i\xvec+\tvec_i\}_{i\in \{1,2,3\}^*}\bigcup\{f_j(\xvec)=\rho_j\vvec_j\wvec_j^T\xvec+\tvec_j\}_{j\in \{4,5\}^*}$  satisfying the convex separation condition.}\label{fig:example}
 \end{figure}

	\begin{theorem}\label{thm:main}
	Let $\mathcal{F}_{\mathsf{w}}$ be a family of affine IFSs as in \eqref{eq:IFSdef} with attractor $\Lambda_{\mathsf{w}}$ such that $\mathcal{F}_{\mathsf{w}}$ contains at least two maps and contains at least one non-invertible affine map. Suppose that $s_{\mathrm{reg}}<1$, $\mathcal{F}_{\mathsf{w}}$ satisfies the convex separation condition uniformly for every $\mathsf{w}\in\mathbb{T}^{\#J}$ and $\mathcal{F}_{\mathrm{reg}}$ is irreducible. Then
	\begin{enumerate}
		\item\label{it:dimension} there exists a set $E_1\subseteq\mathbb{T}^{\#J}$ such that $\dim_HE_1\leq\#J-1$ and for every $\mathsf{w}\in\mathbb{T}^{\#J}\setminus E_1$,
		$$
		\dim_H(\Lambda_{\mathsf{w}})=\dim_B(\Lambda_{\mathsf{w}})=s(\mathcal{F}_{\mathsf{w}}).
		$$
		\item\label{it:exception} If $\sup_{\mathsf{w}}s(\mathcal{F}_{\mathsf{w}})<1$ then there exists a set $E_2\subseteq\mathbb{T}^{\#J}$ such that $\dim_HE_2=\#J-1$ and for every $\mathsf{w}\in E_2$
		$$
		\dim_H(\Lambda_{\mathsf{w}})\leq\overline{\dim}_B(\Lambda_{\mathsf{w}})<s(\mathcal{F}_{\mathsf{w}}).
		$$
	\end{enumerate}
	\end{theorem}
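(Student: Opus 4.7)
The strategy is to adapt the projection method of B\'ar\'any--Hochman--Rapaport \cite{BaranyHochmanRapaport2019} to the regime $s(\mathcal{F}_\mathsf{w})\leq 1$ (which holds throughout, as noted before the theorem), while tracking how the rank-one maps $\{f_j\}_{j\in J}$ behave under projection. In this regime $\varphi^s(A)=\|A\|^s$, and for $\theta\in\RP$ one computes
\begin{equation*}
\pi_\theta\circ f_i=s_{i,\theta}\circ\pi_{A_i^T\theta/\|A_i^T\theta\|}\ (i\in I),\qquad \pi_\theta\circ f_j=s_{j,\theta}\circ\pi_{\wvec_j}\ (j\in J),
\end{equation*}
where $s_{i,\theta}$ and $s_{j,\theta}$ are affine maps on $\R$ with contraction ratios $\|A_i^T\theta\|$ and $\rho_j|\langle\vvec_j,\theta\rangle|$. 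Irreducibility of $\mathcal{F}_{\mathrm{reg}}$ produces a unique Furstenberg measure $\nu$ on $\RP$ for the transpose cocycle of $\{A_i\}_{i\in I}$, and the uniform convex separation condition yields a well-defined symbolic coding of $\Lambda_\mathsf{w}$.

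For part \eqref{it:dimension}, the upper bound $\dim_H\Lambda_\mathsf{w}\leq s(\mathcal{F}_\mathsf{w})$ is already supplied by \cite{kaenmaki2022non}, so only the matching lower bound needs proof outside the exceptional set. A Ledrappier--Young-type projection formula in the sub-one-dimensional regime expresses $\dim_H\Lambda_\mathsf{w}$ as an average of $\dim_H(\pi_\theta\Lambda_\mathsf{w})$ against $\nu$; the projected IFS on $\R$ is a self-similar-type system with formal similarity dimension $s(\mathcal{F}_\mathsf{w})$ by a pressure computation, and Hochman's theorem identifies the dimension of its attractor with $s(\mathcal{F}_\mathsf{w})$ under exponential separation of the projected translations. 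Those translations are real-analytic in $\mathsf{w}\in\mathbb{T}^{\#J}$ and, for any two distinct words, their difference is a non-trivial real-analytic function; a Hochman-style parameter-exclusion argument then confines the set of $\mathsf{w}$ for which super-exponential coincidences persist for $\nu$-a.e.\ $\theta$ to a countable union of proper analytic subvarieties, of Hausdorff dimension at most $\#J-1$.

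For part \eqref{it:exception}, the plan is to manufacture an explicit codimension-one family of exact overlaps. Fix $j_0\in J$; then
\begin{equation*}
f_{j_0}\circ f_\iii(\Lambda_\mathsf{w})=\tvec_{j_0}+\rho_{j_0}\vvec_{j_0}\cdot\wvec_{j_0}^T f_\iii(\Lambda_\mathsf{w})
\end{equation*}
is a line segment parallel to $\vvec_{j_0}$ whose endpoints depend analytically on $\wvec_{j_0}$. Choose two distinct admissible words $\iii,\jjj$ (of appropriate length, designed so that their $\wvec_{j_0}$-projections have matching linear scaling) for which the equation $\wvec_{j_0}^T f_\iii(\Lambda_\mathsf{w})=\wvec_{j_0}^T f_\jjj(\Lambda_\mathsf{w})$ collapses to a single real-analytic constraint on $\wvec_{j_0}\in\mathbb{S}^1$. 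Its zero set is a proper closed subset $S\subsetneq\mathbb{S}^1$, and $E_2:=S\times\prod_{j\neq j_0}\mathbb{S}^1$ has Hausdorff dimension exactly $\#J-1$. On $E_2$ the two cylinders collapse to one and, by closing up under symbolic prefixes and suffixes, a positive-density set of exact identifications appears in the word tree; since $s(\mathcal{F}_\mathsf{w})<1$, this redundancy is felt directly in the covering sum, and a pressure comparison yields $\overline{\dim}_B\Lambda_\mathsf{w}<s(\mathcal{F}_\mathsf{w})$.

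\emph{Main obstacle.} The crux is the parameter exclusion in part \eqref{it:dimension}. Each $\wvec_j$ enters the translations through non-polynomial (trigonometric) expressions, and compositions containing several copies of the same $f_j$ couple these expressions non-linearly; one must verify that the pairwise differences of word-translations do not vanish identically in $\mathsf{w}$ after integration against the Furstenberg measure, and that their zero sets are genuinely codimension one. This is the technical input required to keep $\dim_H E_1\leq\#J-1$ while handling both the invertible Hochman-exclusion and the novel non-invertible parameters simultaneously.
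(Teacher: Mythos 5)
Your plan for part \eqref{it:dimension} takes the B\'ar\'any--Hochman--Rapaport route (Furstenberg measure, Ledrappier--Young, projections $\pi_\theta\Lambda_{\mathsf{w}}$), and this is where the proposal breaks down. There is no ``Ledrappier--Young-type projection formula'' expressing $\dim_H\Lambda_{\mathsf{w}}$ as an average of $\dim_H(\pi_\theta\Lambda_{\mathsf{w}})$ against $\nu$: projections are Lipschitz, so $\dim_H(\pi_\theta\Lambda_{\mathsf{w}})\leq\dim_H\Lambda_{\mathsf{w}}$ always, and any identity of the kind you invoke would already contain the statement that typical projections preserve dimension --- which is precisely the lower bound you are supposed to prove. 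The measure-theoretic version of this machinery (exact dimensionality, Ledrappier--Young, transversality of the projection cocycle) is established only for invertible systems, and the letters $j\in J$ destroy the stationary structure you rely on: after applying $f_j$ the projection direction jumps to the fixed direction $\wvec_j$ regardless of $\theta$, so the Furstenberg measure of $\{A_i\}_{i\in I}$ does not govern the words of $(I\cup J)^*$. The paper avoids all of this with a much more elementary observation that your proposal misses: since each $f_j$ maps the whole plane onto the line $L_j=\mathrm{Im}(A_j)+\tvec_j$, the compositions $f_j\circ f_{\ibar}\circ f_j$ with $\ibar\in(I\cup J\setminus\{j\})^*$ restrict to a genuine self-similar IFS on $L_j$ whose attractor sits \emph{inside} $\Lambda_{\mathsf{w}}$ and whose similarity dimension converges to $s(\mathcal{F}_{\mathsf{w}})$ (Lemmas~\ref{lem:affdim} and \ref{lem:affdim2}); Hochman's one-parameter theorem applied to this one-dimensional family (non-degeneracy coming from the convex separation condition via Lemma~\ref{CH8_convex}), followed by Mattila's slicing theorem to pass from one-parameter lines to $\mathbb{T}^{\#J}$, gives $\dim_H E_1\leq\#J-1$. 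Your alternative of cutting the exceptional set out as ``a countable union of proper analytic subvarieties'' is not substantiated: you would still need a mechanism producing the lower bound at the non-exceptional parameters, and that mechanism is absent.

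For part \eqref{it:exception} your idea --- impose a codimension-one condition on $\wvec_{j_0}$ forcing an exact overlap, then compare pressures --- is the right one and close to the paper's, but the step you leave open is the essential one: you must show the real-analytic constraint on $\wvec_{j_0}\in\mathbb{S}^1$ actually has a solution, i.e.\ that $S\neq\emptyset$; a ``proper closed subset'' could be empty, and then $E_2=\emptyset$. The paper pins this down concretely: for $i\neq j$ it shows (Lemma~\ref{lem:commonfix}) that the two contractions of $L_j$ induced by $f_j$ and $f_j\circ f_i$ can be made to share a fixed point for some $\wvec_j$, by an intermediate value argument whose endpoints are supplied by the convex separation condition (Lemma~\ref{CH8_convex} gives directions in which $f_j(U)$ and $f_i(U)$ project disjointly, in both orders). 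Sharing a fixed point forces $f_j\circ f_j\circ f_i\equiv f_j\circ f_i\circ f_j$, so the third-iterate IFS has a redundant map, and deleting it leaves $\Lambda_{\mathsf{w}}$ unchanged while strictly decreasing the affinity dimension by Lemmas~\ref{lem:affdim} and \ref{lem:affdim2} (here $\sup_{\mathsf{w}}s(\mathcal{F}_{\mathsf{w}})<1$ is used so that the affinity dimension is governed by the subadditive sums of norms, where removing a word has a strict effect). Without an existence argument of this type, your construction of $E_2$ is not complete.
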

Let us note that it is not known whether the box-counting dimension of self-affine sets exists and equals to the Hausdorff dimension in general. Furthermore, if the matrices $\{A_i\}_{i\in I}$ do not preserve any finite collection of the proper subspaces of $\R^2$ then by B\'ar\'any, Hochman and Rapaport \cite{BaranyHochmanRapaport2019} the affinity dimension $s_{\mathrm{reg}}$ of the subsystem formed by the invertible mappings serves as a lower bound for $\dim_H(\Lambda_{\mathsf{w}})$ for every $\mathsf{w}\in\mathbb{T}^{\#J}$, however, it is not necessarily the case if $\{A_i\}_{i\in I}$ is only irreducible.

	% \begin{theorem}[Mauldin and Williams \cite{article}]
		%     \label{CH2_thm_Mw1}
		%     For each geometric construction, there exists a unique collection of 
		%     compact sets, $(\Lambda_1, \, \dots \, , \Lambda_N) $ such that for $ N\in \mathbb{N}$
		%     \begin{gather*}
			%         \Lambda_i = \bigcup \limits_{j=1}^N \bigcup \limits_{e \in \mathcal{E}_{i,j}} f_e(\Lambda_j) \text{ for every } i = 1, \dots , N.
			%     \end{gather*}
		%     The construction object is defined as 
		%     \begin{gather*}
			%         \Lambda := \bigcup\limits_{i=1}^N \Lambda_i
			%     \end{gather*}   
		%     called the attractor.
		% \end{theorem}
	
\section{Study of the affinity dimension}

Let us first introduce some notations used throughout the paper. For an index set $K$, let $K^*=\bigcup_{n=0}^\infty K^n$ be the set of every finite words formed by the symbols in $K$. For a finite word $\ibar \in K^*$, denote $\abs{\ibar}$ the length of $\ibar$. For $\ibar = i_1 \, \dots \, i_n\in K^*$, we denote by $A_{\ibar}$ the finite product $ A_{i_1} \cdot \, \dots \, \cdot A_{i_n}$, and by $f_{\ibar}=f_{i_1}\circ\cdots\circ f_{i_n}$ the finite composition. We also use the convention that $\emptyset\in K^*$ with $|\emptyset|=0$, moreover, $A_{\emptyset}$ and $f_{\emptyset}$ are the identity matrix and identity map of $\R^2$ respectively.

Let $A$ be a $2\times 2$ matrix and let $V$ be a proper subspace of $\R^2$. Then let us define the conditional norm of $A$ on $V$ by
\begin{gather*}
	\norm{A | V} = \sup_{x\in V} \frac{\norm{Ax}}{\norm{x}}.
\end{gather*}
For a $2\times 2$ matrix $A$ with $\mathrm{rank}(A)=1$, denote by $\mathrm{Im}(A)$ and by $\mathrm{Ker}(A)$ the image space and the kernel of $A$ respectively. Clearly, if $A=\rho\vvec\wvec^T$ then $\mathrm{Im}(A)=\langle\vvec\rangle$ and $\mathrm{Ker}(A)=\langle\zvec\rangle$, where $\wvec$ is perpendicular to $\zvec$ and $\langle\vvec\rangle$ denotes the subspace generated by $\vvec$. Let us also denote the usual Euclidean scalar product on $\R^2$ by $\langle\vvec,\wvec\rangle=\vvec^T\wvec$.

Clearly, for every $2\times 2$ matrices $A,B$ with $\mathrm{rank}(B)=1$, and for every subspace $V$ of $\R^2$, we get
\begin{equation}\label{eq:changeofnorm}
\|AB\|=\|A|\mathrm{Im}(B)\|\cdot\|B\|\text{ and }\|AB|V\|=\|A|\mathrm{Im}(B)\|\cdot\|B|V\|.
\end{equation}
Note that if $\mathrm{Ker}(B)=V$ then $\|B|V\|=0$.

\begin{lemma}\label{lem:norm}
	Let $\mathcal{F}_{\mathsf{w}}$ be a family of affine IFSs as in \eqref{eq:IFSdef}. Suppose that $\mathcal{F}_{\mathrm{reg}}=\{f_i(\xvec)=A_i\xvec+\tvec_i\}_{i\in I}$ is irreducible. Then there exist constants $C>0,K>0$ such that for every $\ibar\in I^*$ and for every $i,j\in J$, there exists $\jbar\in I^*$ with $|\jbar|\leq K$ such that
	$$
	\|A_iA_{\ibar}A_{\jbar}|\mathrm{Im}(A_j)\|\geq C\|A_{\ibar}\|.
	$$
	In particular,
	$$
	\smashoperator[r]{\sum_{\jbar\in\bigcup_{k=0}^KI^k}}\|A_{\ibar}A_{\jbar}|\mathrm{Im}(A_j)\|\geq \smashoperator[r]{\sum_{\jbar\in\bigcup_{k=0}^KI^k}}\|A_iA_{\ibar}A_{\jbar}|\mathrm{Im}(A_j)\|\geq C\|A_{\ibar}\|.
	$$
\end{lemma}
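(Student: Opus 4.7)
The plan is to decompose the norm using \eqref{eq:changeofnorm}, reduce the problem to steering the line $\mathrm{Im}(A_j)$ away from two specific bad directions in $\RP$, and then invoke irreducibility to produce a suitably short word $\jbar$.

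Writing $V_j:=\mathrm{Im}(A_j)$ and $W:=A_{\jbar}(V_j)$, and using that $A_{\jbar}$ and $A_{\ibar}$ are both invertible as products of matrices from $\{A_i\}_{i\in I}$, two applications of \eqref{eq:changeofnorm} give
\[
\bigl\|A_i A_{\ibar} A_{\jbar}\,\big|\,V_j\bigr\|
=\bigl\|A_i\,\big|\,A_{\ibar}(W)\bigr\|\cdot\bigl\|A_{\ibar}\,\big|\,W\bigr\|\cdot\bigl\|A_{\jbar}\,\big|\,V_j\bigr\|.
\]
The middle factor is comparable to $\|A_{\ibar}\|$ exactly when $W$ is bounded away (in $\RP$) from the bottom right singular direction $e^{-}(\ibar)$ of $A_{\ibar}$; the first factor is bounded below by a positive constant iff $A_{\ibar}(W)$ is bounded away from $\mathrm{Ker}(A_i)=\wvec_i^\perp$, equivalently $W$ is bounded away from $A_{\ibar}^{-1}(\wvec_i^\perp)$; and the third factor is automatically bounded below once $\jbar$ is drawn from a fixed finite pool of words. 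So the lemma reduces to finding, for each triple $(\ibar,i,j)$, a short $\jbar\in I^*$ such that $A_{\jbar}(V_j)$ simultaneously misses both $e^{-}(\ibar)$ and $A_{\ibar}^{-1}(\wvec_i^\perp)$ by some uniform angle $\delta>0$ in $\RP$.

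To produce such a $\jbar$, I would construct, for each of the finitely many directions $V_j$ with $j\in J$, a finite set $\mathcal W_j\subseteq I^*$ of words of uniformly bounded length such that $\{A_{\jbar}(V_j):\jbar\in\mathcal W_j\}$ contains at least three pairwise $\delta$-separated directions in $\RP$. The key consequence of irreducibility is that the orbit $\{A_{\jbar}(V_j):\jbar\in I^*\}$ cannot collapse to a single point of $\RP$ (a single-point orbit would be a common invariant line of $\{A_i\}_{i\in I}$), which yields two distinct orbit directions; a compactness argument on $\RP$ combined with irreducibility and the finiteness of $J$ then upgrades this to three $\delta$-separated directions and makes the construction quantitative and uniform. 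With three well-spread directions in hand, the two bad directions can block at most two of them, leaving at least one valid $\jbar$. Taking $K:=\max\{|\jbar|:\jbar\in\bigcup_j\mathcal W_j\}$ and $C$ as a suitable product of $\sin\delta$, $\min_{i\in J}\rho_i$, and $\min_{j,\jbar\in\mathcal W_j}\|A_{\jbar}|V_j\|$ then yields the desired inequality for every $\ibar$.

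The main obstacle is the quantitative step of upgrading from merely two orbit directions to three uniformly $\delta$-separated ones under irreducibility alone; this is the step that genuinely exploits the hypothesis (not merely the existence of multiple orbit points) and that must be made uniform over $\ibar$ in a compactness argument on $\RP$. Once the main inequality is established, the \emph{in particular} clause follows at once: the leftmost inequality is termwise from $\|A_i\|<1$, and the rightmost is obtained by retaining only the single good summand provided by the first part of the lemma.
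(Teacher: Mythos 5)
Your overall strategy --- split the norm into three factors via the singular value decomposition of $A_{\ibar}$ and then choose $\jbar$ so that $A_{\jbar}(\mathrm{Im}(A_j))$ avoids finitely many bad directions in $\RP$ --- is genuinely different from the paper's, which follows Feng's compactness argument: assuming failure for all $C,K$, one extracts an accumulation point $A'$ of $A_{\ibar}^T/\|A_{\ibar}\|$, deduces $\langle A'\wvec_{i'},A_{\jbar}\vvec_{j'}\rangle=0$ for all $\jbar\in I^*$, and concludes that $\mathrm{span}\{A_{\jbar}\vvec_{j'}\}$ would be a proper invariant subspace. Your route, however, has two genuine gaps. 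The first is the reduction of the first factor to ``$W$ bounded away from $A_{\ibar}^{-1}(\wvec_i^\perp)$'': this equivalence is not uniform in $\ibar$, because the projective action of $A_{\ibar}$ has distortion of order $\alpha_1(A_{\ibar})/\alpha_2(A_{\ibar})$, which is unbounded over $\ibar\in I^*$. When this ratio is large, $A_{\ibar}(W)$ is pinned within angle $O(\alpha_2/\alpha_1)$ of the top left-singular direction $f^{+}(\ibar)$ for \emph{every} $W$ that is $\delta$-separated from $e^{-}(\ibar)$; hence the first factor is essentially $\rho_i|\langle\wvec_i,f^{+}(\ibar)\rangle|$ no matter which $\jbar$ you pick, and right-multiplication by $A_{\jbar}$ cannot influence it. Concretely, take $A_1=\mathrm{diag}(1/2,1/4)$ and $A_2=\tfrac12 R_{\pi/2}$ (an irreducible pair), $\wvec_i=e_2$ and $\ibar=1^n$: then $\|A_iA_{\ibar}A_{\jbar}|\mathrm{Im}(A_j)\|\leq\rho_i\,4^{-n}$ for every $\jbar$, while $\|A_{\ibar}\|=2^{-n}$, so no pigeonhole among the images $A_{\jbar}(V_j)$ can rescue the estimate. (This also shows the point is delicate in the statement itself: what irreducibility actually controls is $\|A_iA_{\ibar}A_{\jbar}|\mathrm{Im}(A_j)\|$ against $\|A_iA_{\ibar}\|=\rho_i\|A_{\ibar}^T\wvec_i\|$, not against $\|A_{\ibar}\|$.)

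The second gap is the step you yourself flag as the main obstacle: producing three pairwise $\delta$-separated directions in the orbit $\{A_{\jbar}(V_j):\jbar\in I^*\}$. Under irreducibility alone this is not merely hard, it is unavailable: irreducibility forbids a single invariant line but permits an invariant \emph{pair} of lines, and in the example above the orbit of $\langle e_1\rangle$ is exactly $\{\langle e_1\rangle,\langle e_2\rangle\}$. The paper's contradiction argument needs only that the orbit spans $\R^2$ --- two distinct directions suffice, because its bad set is the kernel of the single linear functional $u\mapsto\langle A'\wvec_{i'},u\rangle$ --- and that is precisely what irreducibility delivers. Your argument needs three directions only because estimating the two factors separately created a second bad direction; treating $\langle A_{\ibar}^T\wvec_i,A_{\jbar}\vvec_j\rangle$ as a single bilinear quantity, as the paper does, avoids both of these problems.
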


The proof is a slight modification of the proof of Feng \cite[Proposition~2.8]{Feng2009}.

\begin{proof}
	For every $i\in J$, $A_i=\rho_i\vvec_i\wvec_i^T$. Let us argue by contradiction. Suppose that for every $C,K>0$ there exist $\ibar\in I^*$ and $i,j\in J$ such that for every $\jbar\in I^*$ with $|\jbar|\leq K$ 
	$$
	\|A_iA_{\ibar}A_{\jbar}|\mathrm{Im}(A_j)\|=\rho_i\langle A_{\ibar}^T\wvec_i,A_{\jbar}\vvec_j\rangle< C\|A_{\ibar}\|.
	$$
	Letting $C\to0$ and $K\to\infty$, and taking an accumulation point $A'$ of $A_{\ibar}^T/\|A_{\ibar}\|$ we get that there exists $i',j'\in J$ such that
	$$
	\langle A'\wvec_{i'},A_{\jbar}\vvec_{j'}\rangle=0
	$$
	for every $\jbar\in I^*$. Hence, $V:=\left\langle\bigcup_{\jbar\in I^*}A_{\jbar}\vvec_{j'}\right\rangle=\langle A'\wvec_{i'}\rangle^{\perp}$ is a proper subspace of $\R^2$ invariant with respect to the matrices $\{A_i\}_{i\in I}$, which is a contradiction.
\end{proof}

For every $j\in J$, let us define 
$$
s_j(\mathsf{w}):=\inf\left\{s\geq0:\smashoperator[r]{\sum_{\ibar\in (I\cup J)^*}}\|A_jA_{\ibar}|\mathrm{Im}(A_j)\|^s<\infty\right\}.
$$
We note that a simple corollary of Feng and K\"aenm\"aki \cite[Proposition~1.2]{FengKaenmaki2011} is that
\begin{equation}\label{eq:sreglim}
	\lim_{s\searrow s_{\mathrm{reg}}}\sum_{\ibar\in I^*}\|A_{\ibar}\|^s=\infty.
\end{equation}

\begin{lemma}\label{lem:affdim}
	Let $\mathcal{F}_{\mathsf{w}}$ be a family of affine IFSs as in \eqref{eq:IFSdef}. Suppose that $s_{\mathrm{reg}}<1$ and $\mathcal{F}_{\mathrm{reg}}=\{f_i(\xvec)=A_i\xvec+\tvec_i\}_{i\in I}$ is irreducible. Then
	$$
	s(\mathcal{F}_{\mathsf{w}})=\min\{1,s_j(\mathsf{w})\}>s_{\mathrm{reg}}\text{ for every }j\in J,
	$$
	where $s(\mathcal{F}_{\mathsf{w}})$ is defined in \eqref{CH3_def_aff_dim}.
\end{lemma}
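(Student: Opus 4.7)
The strategy rests on the fact that since $s_{\mathrm{reg}}<1$, we have $s(\mathcal{F}_{\mathsf{w}})\leq 1$, so for every exponent $t$ of interest the singular value function reduces to $\varphi^t(A)=\|A\|^t$. The plan is to analyse in parallel the two series
\[
\Phi(t):=\sum_{\ibar\in(I\cup J)^*\setminus\{\emptyset\}}\|A_{\ibar}\|^t,\qquad \tilde R_j(t):=\sum_{\ibar\in(I\cup J)^*}\|A_jA_{\ibar}|\mathrm{Im}(A_j)\|^t,
\]
whose critical exponents are, respectively, $s(\mathcal{F}_{\mathsf{w}})$ (capped at $1$) and $s_j(\mathsf{w})$, and to show that these coincide and lie strictly above $s_{\mathrm{reg}}$.

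The easy direction $s_j(\mathsf{w})\leq\inf\{t:\Phi(t)<\infty\}$ follows from $\|A_jA_{\ibar}|\mathrm{Im}(A_j)\|\leq\|A_jA_{\ibar}\|$, which yields $\tilde R_j(t)\leq\Phi(t)$. For the converse, the heart of the argument is an explicit factorization. Any word $\ibar\in(I\cup J)^*$ with at least one $J$-letter admits a unique writing $\ibar=\jbar_0\,j_1\,\jbar_1\cdots j_m\,\jbar_m$ with $\jbar_k\in I^*$, $j_k\in J$ and $m\geq 1$. Using $A_{j_k}=\rho_{j_k}\vvec_{j_k}\wvec_{j_k}^T$, a direct telescoping yields
\[
A_{\ibar}=\Bigl(\prod_{k=1}^m\rho_{j_k}\Bigr)\Bigl(\prod_{k=1}^{m-1}\langle\wvec_{j_k},A_{\jbar_k}\vvec_{j_{k+1}}\rangle\Bigr)(A_{\jbar_0}\vvec_{j_1})(A_{\jbar_m}^T\wvec_{j_m})^T,
\]
a rank-one matrix whose norm equals the product of the scalar factors. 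Raising to the $t$-th power and summing, each $\jbar_k$ enters exactly one factor, so the sums decouple. Introducing the non-negative $\#J\times\#J$ matrix
\[
M(t)_{j'j''}:=\rho_{j''}^t\sum_{\jbar\in I^*}|\langle\wvec_{j'},A_{\jbar}\vvec_{j''}\rangle|^t
\]
and appropriate boundary vectors (involving $\sum_{\jbar\in I^*}\|A_{\jbar}\vvec_j\|^t$ and $\sum_{\jbar\in I^*}\|A_{\jbar}^T\wvec_j\|^t$ for $\Phi$, and of the form $\sum_{\jbar\in I^*}|\langle\wvec_{j'},A_{\jbar}\vvec_j\rangle|^t$ for $\tilde R_j$), both sums become geometric series in $M(t)$, finite precisely when the boundary terms are finite and $\rho(M(t))<1$. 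By Lemma~\ref{lem:norm} (and its transpose, valid because irreducibility passes to $\{A_i^T\}_{i\in I}$) every boundary term and every entry of $M(t)$ has the same critical exponent as $R(t):=\sum_{\jbar\in I^*}\|A_{\jbar}\|^t$, namely $s_{\mathrm{reg}}$. Hence the critical exponent of $\Phi(t)$ and that of $\tilde R_j(t)$ both equal $\inf\{t>s_{\mathrm{reg}}:\rho(M(t))<1\}$, independent of $j\in J$, yielding $s(\mathcal{F}_{\mathsf{w}})=\min\{1,s_j(\mathsf{w})\}$.

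For the strict inequality $s_j(\mathsf{w})>s_{\mathrm{reg}}$, combine Lemma~\ref{lem:norm} with \eqref{eq:sreglim}: the lemma bounds each entry of $M(t)$ below by a constant multiple of $R(t)$ (up to a bounded-length adjustment), and \eqref{eq:sreglim} says $R(t)\to\infty$ as $t\searrow s_{\mathrm{reg}}$. Hence every entry of $M(t)$ blows up, $\rho(M(t))\to\infty$, and by continuity $\rho(M(s_{\mathrm{reg}}+\varepsilon))>1$ for all sufficiently small $\varepsilon>0$, so $s_j(\mathsf{w})\geq s_{\mathrm{reg}}+\varepsilon>s_{\mathrm{reg}}$.

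The main technical obstacle I anticipate is the bookkeeping of the telescoping factorization --- verifying that after raising to the $t$-th power and summing, the contributions really reorganize as $u(t)^T(I-M(t))^{-1}v(t)$ with the correct boundary vectors. This works precisely because the rank-one structure of the non-invertible maps causes the cross-couplings $\langle\wvec_{j_k},A_{\jbar_k}\vvec_{j_{k+1}}\rangle$ to factor into independent scalars, each depending on only one $\jbar_k$. A secondary point is applying Lemma~\ref{lem:norm} in transposed form to control $\sum_{\jbar}\|A_{\jbar}^T\wvec_j\|^t$; this is legitimate because $\{A_i^T\}_{i\in I}$ inherits irreducibility from $\{A_i\}_{i\in I}$ in dimension two.
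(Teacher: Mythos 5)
Your argument is correct, but it is organized quite differently from the paper's proof. The paper splits the lemma into three separate steps: (i) a direct lower bound $\sum_{\ibar}\|A_jA_{\ibar}|\mathrm{Im}(A_j)\|^s\geq\sum_{k\ge 0}\bigl(C\sum_{\ibar\in I^*}\|A_{\ibar}\|^s\bigr)^{k+1}$ (essentially your diagonal entry $M(t)_{jj}$), combined with \eqref{eq:sreglim}, to get $s_j(\mathsf{w})>s_{\mathrm{reg}}$; (ii) a sandwich argument, inserting blocks $A_iA_{\ibar_1}A_i$ into the $j$-series, to show $s_i(\mathsf{w})=s_j(\mathsf{w})$ for all $i,j\in J$; and (iii) for the upper bound $s(\mathcal{F}_{\mathsf{w}})\leq\max\{s_{\mathrm{reg}},\max_{j}s_j(\mathsf{w})\}$, an induction that removes the letters of $J$ one at a time, bounding at each step the sum over words containing $j_{n+1}$ by products of sums over words avoiding it. You instead exploit the rank-one structure to factor $\|A_{\ibar}\|^t$ completely and resum everything as a geometric series in a single strictly positive $\#J\times\#J$ transfer matrix $M(t)$, so that the $j$-independence, the identification with $s(\mathcal{F}_{\mathsf{w}})$, and the strict inequality are all read off at once from $\inf\{t:\rho(M(t))<1\}$; this is more conceptual and arguably cleaner, at the cost of invoking Perron--Frobenius (you need every entry of $M(t)$ bounded below by a multiple of $\sum_{\ibar\in I^*}\|A_{\ibar}\|^t$, which is exactly what Lemma~\ref{lem:norm} provides) and of the bookkeeping you flag, which does check out. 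Two small points to tidy: the identification of $s(\mathcal{F}_{\mathsf{w}})$ with the capped critical exponent of $\Phi$ deserves a sentence (for $t>1$ the words containing a $J$-letter contribute $\varphi^t=0$ and the $I^*$-part converges since $s_{\mathrm{reg}}<1$, so the infimum in \eqref{CH3_def_aff_dim} is at most $1$ and for $t\le 1$ one has $\varphi^t(A)=\|A\|^t$); and your appeal to continuity at the end is unnecessary, since $\rho(M(t))\geq M(t)_{jj}\to\infty$ as $t\searrow s_{\mathrm{reg}}$ already forces $\rho(M(s_{\mathrm{reg}}+\varepsilon))>1$ for all small $\varepsilon>0$.
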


Let us note that the claim $s(\mathcal{F}_{\mathsf{w}})>s_{\mathrm{reg}}$ follows also by \cite[Lemma~2.9]{kaenmaki2022non}, however, for the sake of completeness, we give an alternative proof here.

\begin{proof}
	First, we show that $s_j(\mathsf{w})>s_{\mathrm{reg}}$ for every $j\in J$. Observe that by \eqref{eq:changeofnorm} and Lemma~\ref{lem:norm}
	\[
	\begin{split}
		\smashoperator[r]{\sum_{\ibar\in (I\cup J)^*}}\|A_jA_{\ibar}|\mathrm{Im}(A_j)\|^s&=\sum_{k=0}^\infty\sum_{j_1,\ldots,j_k\in J}\smashoperator[r]{\sum_{\ibar_0,\ldots,\ibar_{k}\in I^*}}\|A_jA_{\ibar_0}A_{j_1}A_{\ibar_1}\cdots A_{j_k}A_{\ibar_k}|\mathrm{Im}(A_j)\|^s\\
		&=\sum_{k=0}^\infty\sum_{\substack{j_1,\ldots,j_k\in J\\ j_0=j_{k+1}=j}}\sum_{\ibar_0,\ldots,\ibar_{k}\in I^*}\prod_{\ell=0}^k\|A_{j_\ell}A_{\ibar_\ell}|\mathrm{Im}(A_{j_{\ell+1}})\|^s\\
		&\geq\sum_{k=0}^\infty\sum_{\ibar_0,\ldots,\ibar_{k}\in \bigcup_{k=K}^\infty I^k}\prod_{\ell=0}^k\|A_{j}A_{\ibar_\ell}|\mathrm{Im}(A_{j})\|^s\\
		&\geq\sum_{k=0}^\infty\left(C\sum_{\ibar\in I^*}\|A_{\ibar}\|^s\right)^{k+1}.
	\end{split}
	\]
	By \eqref{eq:sreglim}, there exists $s>s_{\mathrm{reg}}$ such that $\sum_{\ibar\in I^*}\|A_{\ibar}\|^s>C^{-1}$ and so, $s_j(\mathsf{w})\geq s>s_{\mathrm{reg}}$.
	
	Now, let us show that $s_i(\mathsf{w})=s_j(\mathsf{w})$ for every $i,j\in J$. Again by \eqref{eq:changeofnorm} and Lemma~\ref{lem:norm}
		\[
	\begin{split}
		\smashoperator[r]{\sum_{\ibar\in (I\cup J)^*}}\|A_jA_{\ibar}|\mathrm{Im}(A_j)\|^s&\geq\smashoperator[l]{\sum_{\ibar_0,\ibar_{2}\in I^*}}\smashoperator[r]{\sum_{\ibar_1\in(I\cup J)^*}}\|A_jA_{\ibar_0}A_{i}A_{\ibar_1}A_iA_{\ibar_2}|\mathrm{Im}(A_j)\|^s\\
		&=\smashoperator[l]{\sum_{\ibar_0,\ibar_{2}\in I^*}}\smashoperator[r]{\sum_{\ibar_1\in(I\cup J)^*}}\|A_jA_{\ibar_0}|\mathrm{Im}(A_{i})\|^s\|A_iA_{\ibar_1}|\mathrm{Im}(A_i)\|^s\|A_iA_{\ibar_2}|\mathrm{Im}(A_j)\|^s\\
		&\geq C^2\smashoperator{\sum_{\ibar_1\in(I\cup J)^*}}\|A_iA_{\ibar_1}|\mathrm{Im}(A_i)\|^s.
	\end{split}
	\]
	Hence, the claim follows by symmetrical reasons.
	
	Finally, let us show that $s_j(\mathsf{w})=s(\mathcal{F}_{\mathsf{w}})$. Clearly, by \eqref{eq:changeofnorm}
	$$
	\smashoperator{\sum_{\ibar\in(I\cup J)^*}}\|A_{\ibar}\|^s\geq\smashoperator{\sum_{\ibar\in(I\cup J)^*}}\|A_jA_{\ibar}A_j\|^s=\|A_j\|^s\smashoperator{\sum_{\ibar\in(I\cup J)^*}}\|A_jA_{\ibar}|\mathrm{Im}(A_j)\|^s
	$$
	for every $j\in J$, and so $s_j(\mathsf{w})\leq s(\mathcal{F}_{\mathsf{w}})$.
	
	On the other hand, let us enumerate the elements of $J$ by $j_1,\ldots,j_{m}$. Then similarly to previous calculations we have,
	\begin{align*}
		\sum_{\ibar\in(I\cup J)^*} \|A_{\ibar}\|^s&= \smashoperator{\sum_{\ibar\in (I\cup J\setminus\{j_1\})^*}} \|A_{\ibar}\|^s + 
		\smashoperator[r]{\sum_{\ibar_1, \ibar_2\in(I\cup J\setminus\{j_1\})^*}} \|A_{\ibar_1} A_{j_1} A_{\ibar_2}\|^s 
		+\smashoperator[l]{\sum_{\ibar_1, \ibar_2\in(I\cup J\setminus\{j_1\})^*}} \smashoperator[r]{\sum_{\jbar \in (I\cup J)^*}} \|A_{\ibar_1} A_{j_1} A_{\jbar} A_{j_1} A_{\ibar_2}\|^s \\
		&\leq \smashoperator{\sum_{\ibar\in (I\cup J\setminus\{j_1\})^*}} \|A_{\ibar}\|^s + 
		\|A_{j_1}\|^s \left(\smashoperator[r]{\sum_{\ibar\in(I\cup J\setminus\{j_1\})^*}} \|A_{\ibar}\|^s\right)^2
		+\left(\smashoperator[r]{\sum_{\ibar\in(I\cup J\setminus\{j_1\})^*}} \|A_{\ibar}\|^s\right)^2\cdot\|A_{j_1}\|^{2s} \\
        & \quad \cdot \smashoperator{\sum_{\jbar \in (I\cup J)^*}} \|A_{j_1} A_{\jbar}|\mathrm{Im}(A_{j_1})\|^s.
\end{align*}
By induction, we get that for every $n\in[1,m-1]\cap\N$
\begin{align*}
	\smashoperator[r]{\sum_{\ibar\in(I\cup J\setminus\{j_k\}_{k=1}^n)^*}} \|A_{\ibar}\|^s&= \smashoperator[r]{\sum_{\ibar\in (I\cup J\setminus\{j_k\}_{k=1}^{n+1})^*}} \|A_{\ibar}\|^s + 
	\smashoperator[r]{\sum_{\ibar_1, \ibar_2\in(I\cup J\setminus\{j_k\}_{k=1}^{n+1})^*}} \|A_{\ibar_1} A_{j_{n+1}} A_{\ibar_2}\|^s \\
	& \quad +\smashoperator[l]{\sum_{\ibar_1, \ibar_2\in(I\cup J\setminus\{j_k\}_{k=1}^{n+1})^*}} \smashoperator[r]{\sum_{\jbar \in (I\cup J\setminus\{j_k\}_{k=1}^{n})^*}} \|A_{\ibar_1} A_{j_{n+1}} A_{\jbar} A_{j_{n+1}} A_{\ibar_2}\|^s \\
	&\leq \smashoperator{\sum_{\ibar\in (I\cup J\setminus\{j_k\}_{k=1}^{n+1})^*}} \|A_{\ibar}\|^s + 
	\|A_{j_{n+1}}\|^s \left(\smashoperator[r]{\sum_{\ibar\in(I\cup J\setminus\{j_k\}_{k=1}^{n+1})^*}} \|A_{\ibar}\|^s\right)^2
	+\left(\smashoperator[r]{\sum_{\ibar\in(I\cup J\setminus\{j_k\}_{k=1}^{n+1})^*}} \|A_{\ibar}\|^s\right)^2\cdot\|A_{j_{n+1}}\|^{2s} \\
    & \quad \cdot \smashoperator[r]{\sum_{\jbar \in (I\cup J)^*}} \|A_{j_{n+1}} A_{\jbar}|\mathrm{Im}(A_{j_{n+1}})\|^s.
\end{align*}
Thus, we get that $s(\mathcal{F}_{\mathsf{w}})\leq\max\{s_{\mathrm{reg}},\max_{j\in J}s_j(\mathsf{w})\}$, which implies the claim.

\end{proof}

Let us observe that for every $j\in J$, by Lemma~\ref{lem:norm}
\[
\begin{split}
\sum_{\ibar\in (I\cup J)^*}\|A_jA_{\ibar}|\mathrm{Im}(A_j)\|^s&=\sum_{k=0}^{\infty}\sum_{\ibar_0,\ldots,\ibar_k\in (I\cup J\setminus\{j\})^*}\|A_jA_{\ibar_0}A_j\cdots A_jA_{\ibar_k}|\mathrm{Im}(A_j)\|^s\\
&=\sum_{k=0}^{\infty}\left(\sum_{\ibar\in(I\cup J\setminus\{j\})^*}\|A_jA_{\ibar}|\mathrm{Im}(A_j)\|^s\right)^k.
\end{split}
\]
Hence,
$$
\smashoperator{\sum_{\ibar\in (I\cup J)^*}}\|A_jA_{\ibar}|\mathrm{Im}(A_j)\|^s<\infty\text{ if and only if }\smashoperator[r]{\sum_{\ibar\in(I\cup J\setminus\{j\})^*}}\|A_jA_{\ibar}|\mathrm{Im}(A_j)\|^s<1.
$$
In other words,
\begin{equation}\label{eq:sj}
s_j(\mathsf{w})=\inf\left\{s\geq0:\smashoperator[r]{\sum_{\ibar\in(I\cup J\setminus\{j\})^*}}\|A_jA_{\ibar}|\mathrm{Im}(A_j)\|^s<1\right\}.
\end{equation}

\begin{lemma}\label{lem:affdim2}
	Let $\mathcal{F}_{\mathsf{w}}$ be a family of affine IFSs as in \eqref{eq:IFSdef}. Suppose that $s_{\mathrm{reg}}<1$ and $\mathcal{F}_{\mathrm{reg}}=\{f_i(\xvec)=A_i\xvec+\tvec_i\}_{i\in I}$ is irreducible. Then for every $j\in J$
	$$
	\smashoperator[r]{\sum_{\ibar\in(I\cup J\setminus\{j\})^*}}\|A_jA_{\ibar}|\mathrm{Im}(A_j)\|^{s_j(\mathsf{w})}=1,
	$$
	and $s_j(\mathsf{w})$ is the unique solution of the equation above.
\end{lemma}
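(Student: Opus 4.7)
The plan is to show that $h(s) := \sum_{\ibar \in (I \cup J \setminus \{j\})^*} \|A_j A_{\ibar} | \mathrm{Im}(A_j)\|^s$ is continuous and strictly decreasing on an open neighborhood of $s_j(\mathsf{w})$, so that the infimum characterization \eqref{eq:sj} together with an intermediate-value argument forces $h(s_j(\mathsf{w})) = 1$.

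Each summand is of the form $r^s$ with $r \in [0, 1)$, hence $h$ is non-increasing in $s$; monotone convergence as $s \searrow s_j(\mathsf{w})$, combined with $h(s) < 1$ for $s > s_j(\mathsf{w})$, gives $h(s_j(\mathsf{w})) \leq 1 < \infty$. The key step is to establish that $h$ is finite on an open neighborhood of $s_j(\mathsf{w})$, not only at the point. To this end, I would factor each $\ibar \in (I \cup J \setminus \{j\})^*$ by its occurrences of letters in $J \setminus \{j\}$: since each $A_{j'}$ has rank one, a direct calculation expresses $\|A_j A_{\ibar} | \mathrm{Im}(A_j)\|$ as a product of elementary pieces $\rho_i |\langle \wvec_i, A_{\ibar'} \vvec_{i'}\rangle|$, and summing yields a Neumann series in the $(\#J - 1)$-dimensional matrix $\tilde{T}(s)$ with entries $\tilde{T}(i, i'; s) = \rho_i^s \sum_{\ibar' \in I^*} |\langle \wvec_i, A_{\ibar'} \vvec_{i'}\rangle|^s$. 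Convergence of $h(s)$ is then equivalent to $\rho(\tilde{T}(s)) < 1$ together with finiteness of these atomic sums, both valid for $s > s_{\mathrm{reg}}$ by Lemma~\ref{lem:affdim} and \eqref{eq:sreglim}; irreducibility of $\mathcal{F}_{\mathrm{reg}}$ makes every entry of $\tilde{T}$ strictly positive, hence $\tilde{T}$ is a non-negative irreducible matrix. From $h(s_j(\mathsf{w})) < \infty$ and Perron--Frobenius I deduce $\rho(\tilde{T}(s_j(\mathsf{w}))) < 1$, and continuity of the spectral radius in the (continuously varying) matrix entries produces an open neighborhood of $s_j(\mathsf{w})$ on which $\rho(\tilde{T}) < 1$, and hence $h < \infty$.

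Given finiteness in a left neighborhood of $s_j(\mathsf{w})$, dominated convergence yields $h(s_j(\mathsf{w})) = \lim_{s \nearrow s_j(\mathsf{w})} h(s) \geq 1$ from $h(s) \geq 1$ for $s < s_j(\mathsf{w})$, which combined with the earlier upper bound gives $h(s_j(\mathsf{w})) = 1$. Uniqueness follows from the strict monotonicity of $h$ on its interval of finiteness: irreducibility again produces some $\ibar \in I^*$ with $\langle \wvec_j, A_{\ibar} \vvec_j\rangle \neq 0$, so at least one summand in $h$ is strictly positive. The main obstacle is assembling the matrix-geometric decomposition and justifying the spectral-radius continuity cleanly; the degenerate case $\#J = 1$ simplifies considerably, as $h$ reduces to a single Dirichlet-type sum over $I^*$ that is trivially finite on $(s_{\mathrm{reg}}, \infty)$.
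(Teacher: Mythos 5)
Your argument is correct, but it is organized quite differently from the paper's. The paper proves the lemma by induction on the elements of $J$: it peels off one rank-one map at a time, writing the sum over $(I\cup J_{k+1})^*$ as a nested geometric series in $H_{J_k,j_{k+1},j_{k+1}}(s)$, and at each stage proves strict monotonicity, continuity (via the bound $1-x^{s_1-s_2}\leq (s_2-s_1)\log x$ together with $x^{\varepsilon}\log x\leq c(\varepsilon)$), and divergence at the left endpoint of the domain, so that a unique root of $H=1$ exists at every stage. You instead decompose each word of $(I\cup J\setminus\{j\})^*$ into its $I^*$-blocks in one shot, package the cross-terms into a positive transfer matrix $\tilde T(s)$, and characterize finiteness of $h$ by $\rho(\tilde T(s))<1$; the value $h(s_j(\mathsf{w}))=1$ then comes from squeezing via \eqref{eq:sj}, using monotone convergence from the right and dominated convergence from the left once finiteness on an open neighbourhood is secured. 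Both routes ultimately rest on the same two inputs — strict positivity of the cross-sums from irreducibility (Lemma~\ref{lem:norm}) and finiteness of the $I^*$-sums for $s>s_{\mathrm{reg}}$ via \eqref{eq:sreglim} — but your Perron--Frobenius argument gets the open interval of finiteness around $s_j(\mathsf{w})$ without tracking where $h$ blows up, while the paper's induction yields the whole hierarchy of thresholds $d_{J_k,j',j''}$ and existence of the root directly from the intermediate value theorem. The one step you flag but do not execute, continuity of the entries of $\tilde T(s)$ on $(s_{\mathrm{reg}},\infty)$, genuinely needs an argument (lower semicontinuity from monotone limits gives only right-continuity); the required estimate is exactly the paper's base-case computation for $H_{\emptyset,j',j''}$, so this is a gap you can fill verbatim rather than a flaw in the approach.
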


\begin{proof}
	For any subset $J'\subsetneq J$ and for any $j',j''\in J\setminus J'$, let us define the following maps
	$$
	H_{J',j',j''}(s):=\smashoperator[r]{\sum_{\ibar\in(I\cup J')^*}}\|A_{j'}A_{\ibar}|\mathrm{Im}(A_{j''})\|^{s}.
	$$
	By Lemma~\ref{lem:norm}, the map $H_{J',j',j''}$ is strictly monotone decreasing on its support. Furthermore, since $H_{J',j',j''}$ is the limit of an increasing sequence of continuous maps, we get that $H_{J',j',j''}$ is lower semi-continuous, i.e. $\liminf_{s\to s_0}H_{J',j',j''}(s)\geq H_{J',j',j''}(s_0)$ for every $s_0\in[0,\infty)$. In particular, for every $s_0\in\R$ such that $H_{J',j',j''}(s_0)<\infty$,
	\begin{equation}\label{eq:rightcont}
	\lim_{s\searrow s_0}H_{J',j',j''}(s)=H_{J',j',j''}(s_0).
	\end{equation}
	For any $j',j''\in J$,
	\[
	H_{\emptyset,j',j''}(s)=\sum_{\ibar\in I^*}\|A_{j'}A_{\ibar}|\mathrm{Im}(A_{j''})\|^{s}\leq\sum_{\ibar\in I^*}\|A_{\ibar}\|^{s},
	\]
	and so by \eqref{eq:sreglim},  $H_{\emptyset,j',j''}(s)<\infty$ for every $s>s_{\mathrm{reg}}$. On the other hand, by Lemma~\ref{lem:norm}
	\[
	H_{\emptyset,j',j''}(s)=\sum_{\ibar\in I^*}\|A_{j'}A_{\ibar}|\mathrm{Im}(A_{j''})\|^{s}\geq\sum_{\substack{\ibar\in I^*\\|\ibar|\geq K}}\|A_{\ibar}\|^{s}\geq C\sum_{\ibar\in I^*}\|A_{\ibar}\|^s.
	\]
	Hence, by \eqref{eq:sreglim}, $\lim_{s\searrow s_{\mathrm{reg}}}H_{\emptyset,j',j''}(s)=\infty$.
	
	Let us recall the basic facts that for every $x\in\R$, $e^x\geq1+x$ and for every $\varepsilon>0$ there exists a $c=c(\varepsilon)>0$ such that $x^\varepsilon\log x\leq c(\varepsilon)$ for every $x\in[0,1]$. Thus, for every $s_1>s_2>s_{\mathrm{reg}}$
	\[
	\begin{split}
	0<H_{\emptyset,j',j''}(s_2)-H_{\emptyset,j',j''}(s_1)&=\smashoperator[r]{\sum_{\substack{\ibar\in I^*\\ \mathrm{Ker}(A_{j'}A_{\ibar})\neq\mathrm{Im}(A_{j''})}}}\|A_{j'}A_{\ibar}|\mathrm{Im}(A_{j''})\|^{s_2}(1-\|A_{j'}A_{\ibar}|\mathrm{Im}(A_{j''})\|^{s_1-s_2})\\
	&\leq (s_2-s_1)\smashoperator{\sum_{\substack{\ibar\in I^*\\ \mathrm{Ker}(A_{j'}A_{\ibar})\neq\mathrm{Im}(A_{j''})}}}\|A_{j'}A_{\ibar}|\mathrm{Im}(A_{j''})\|^{s_2}\log\|A_{j'}A_{\ibar}|\mathrm{Im}(A_{j''})\|\\
	&\leq (s_1-s_2)cH_{\emptyset,j',j''}(s_2-\varepsilon),
	\end{split}
	\]
	where $s_2-s_{\mathrm{reg}}>\varepsilon$. Thus the map $H_{\emptyset,j',j''}\colon(s_{\mathrm{reg}},\infty)\mapsto\R_+$ is strictly monotone decreasing and continuous, and so, there exists a unique $d_{\emptyset,j',j''}\in(s_{\mathrm{reg}},\infty)$ such that $H_{\emptyset,j',j''}(d_{\emptyset,j',j''})=1$.
	
	Let $j\in J$ be arbitrary but fixed. Let us enumerate the elements of $J$ by $j_1,\ldots,j_m$ such that $j_m=j$. Let $J_{k}:=\{j_1,\ldots,j_k\}$. Let us argue by induction. Namely, suppose that $d_{J_{k},j',j''}$ is well defined for every $j',j''\notin J_k$ and  $H_{J_k,j',j''}(s)\colon(d_{J_{k-1},j_{k},j_{k}},\infty)\mapsto\R_+$ is strictly monotone decreasing and continuous, $\lim_{s\searrow d_{J_{k-1},j_{k},j_{k}}}H_{J_{k},j',j''}(s)=\infty$ and $H_{J_k,j',j''}(d_{J_k,j',j''})=1$ for a unique $d_{J_k,j',j''}>d_{J_{k-1},j_k,j_k}$.
	
	Let $j',j''\notin J_{k+1}$ be arbitrary but fixed. By \eqref{eq:changeofnorm}
	\[
	\begin{split}
	H_{J_{k+1},j',j''}(s)&=H_{J_k,j',j''}(s)+H_{J_k,j',j_{k+1}}(s)H_{J_k,j_{k+1},j''}(s)\\
	&\quad+H_{J_k,j',j_{k+1}}(s)H_{J_k,j_{k+1},j''}(s)\sum_{n=1}^\infty H_{J_k,j_{k+1},j_{k+1}}(s)^n.
	\end{split}
	\]
	Since $H_{J_k,j',j''}(s),H_{J_k,j',j_{k+1}}(s),H_{J_k,j_{k+1},j''}(s)<\infty$ for every $s>d_{J_{k-1},j_k,j_k}$ we get that 
	$$
	H_{J_{k+1},j',j''}(s)<\infty\text{ if and only if }s>d_{J_k,j_{k+1},j_{k+1}},
	$$
	and
	$$
	\smashoperator[r]{\lim_{s\searrow d_{J_k,j_{k+1},j_{k+1}}}}H_{J_{k+1},j',j''}(s)=\infty.
	$$
	It is enough then to show that $H_{J_{k+1},j',j''}(s)$ is continuous, since the strict monotonicity follows by Lemma~\ref{lem:norm}. But similarly to the case $H_{\emptyset,j',j''}$, we get that for any $s_1>s_2>d_{J_k,j_{k+1},j_{k+1}}$
	\[
	\begin{split}
		0<&H_{J_{k+1},j',j''}(s_2)-H_{J_{k+1},j',j''}(s_1)\\
		&=\smashoperator[r]{\sum_{\substack{\ibar\in (I\cup J_{k+1})^*\\ \mathrm{Ker}(A_{j'}A_{\ibar})\not\supset\mathrm{Im}(A_{j''})}}}\|A_{j'}A_{\ibar}|\mathrm{Im}(A_{j''})\|^{s_2}(1-\|A_{j'}A_{\ibar}|\mathrm{Im}(A_{j''})\|^{s_1-s_2})\\
		&\leq (s_2-s_1)\smashoperator{\sum_{\substack{\ibar\in(I\cup J_{k+1})^*\\ \mathrm{Ker}(A_{j'}A_{\ibar})\not\supset\mathrm{Im}(A_{j''})}}}\|A_{j'}A_{\ibar}|\mathrm{Im}(A_{j''})\|^{s_2}\log\|A_{j'}A_{\ibar}|\mathrm{Im}(A_{j''})\|\\
		&\leq (s_1-s_2)cH_{J_{k+1},j',j''}(s_2-\varepsilon),
	\end{split}
	\]
	where $s_2-d_{J_k,j_{k+1},j_{k+1}}>\varepsilon$. Thus, the map $H_{J_{k+1},j',j''}\colon(d_{J_k,j_{k+1},j_{k+1}},\infty)\mapsto\R_+$ is strictly monotone decreasing and continuous. In particular, there exists a unique $d_{J_{k+1},j_{k+2},j_{k+2}}>d_{J_k,j_{k+1},j_{k+1}}$ such that $H_{J_{k+1},j_{k+2},j_{k+2}}(d_{J_{k+1},j_{k+2},j_{k+2}})=1$.
\end{proof}

\section{Lower bound of the dimension}

Let us introduce the natural mapping $\wvec\colon\R\mapsto\mathbb{S}^1$ as
$$
\wvec(\alpha)=\begin{pmatrix}
	\cos(\alpha) \\ \sin(\alpha)
\end{pmatrix}.
$$
This section is devoted to show the following proposition.

\begin{proposition}\label{CH3_prop_dimNew}
	Let $I$ and $J$ be finite collections of indices such that $J$ is non-empty. Let $\mathcal{F}_{\mathrm{reg}}=\{f_i(\xvec)=A_i\xvec+\tvec_i\}_{i\in I}$ be an irreducible IFS of invertible affine mappings such that $s_{\mathrm{reg}}<1$. Furthermore, let $\rho_j\in(0,1)$, $\vvec_j\in\mathbb{S}^1$, $c_j\in\R$, $\tvec_j\in\R^2$ and $\beta_j\in\R_+$ be arbitrary but fixed for every $j\in J$. Finally, for $\alpha\in\R$ let
	\begin{equation}\label{eq:specsystem}
	\mathcal{F}_{\alpha}=\mathcal{F}_{\mathrm{reg}}\cup\{f_j(\xvec)=\rho_j\vvec_j\wvec(c_j+\beta_j\alpha)^T\xvec+\tvec_j\}_{j\in J}
	\end{equation}
	be an IFS of affine mappings. Suppose that $\mathcal{F}_{\alpha}$ satisfies the convex separation condition uniformly for $\alpha\in\R$. Then there exists a set $E \subset \R$ such that $\dim_H(E) = 0$ and for every $\alpha \in \R \setminus E$ 
	\begin{gather*}
		\dim_H(\Lambda_{\alpha}) = s(\mathcal{F}_\alpha), 
	\end{gather*}
	where $\Lambda_{\alpha}$ is the attractor of the IFS $\mathcal{F}_{\alpha}$.
\end{proposition}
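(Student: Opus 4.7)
The approach is to reduce Proposition~\ref{CH3_prop_dimNew} to Hochman's parametric dimension theorem~\cite{Hochman2014} for one-dimensional self-similar IFSs. The hypothesis $s_{\mathrm{reg}}<1$ together with Lemma~\ref{lem:affdim} gives $s(\mathcal{F}_\alpha)=\min\{1,s_j(\alpha)\}\leq 1$ for every $j\in J$, so the affinity dimension lies in $[0,1]$ and is governed by a sum of one-dimensional scaling factors, perfectly matching the setting of Hochman's theorem.

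Fix any $j\in J$ and parametrize the line $L_j:=\tvec_j+\mathrm{Im}(A_j)$ by $\R$ via $s\mapsto\tvec_j+s\vvec_j$. A direct computation shows that for every $\ibar\in(I\cup J\setminus\{j\})^*$ the composition $f_j\circ f_{\ibar}$ maps $L_j$ into itself, acting in this parametrization as the affine similarity
\[
g_{\ibar}(s)=a_{\ibar}(\alpha)\,s+b_{\ibar}(\alpha),\qquad a_{\ibar}(\alpha)=\rho_j\,\wvec_j(\alpha)^TA_{\ibar}(\alpha)\vvec_j,
\]
with $|a_{\ibar}(\alpha)|=\|A_jA_{\ibar}|\mathrm{Im}(A_j)\|$. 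Hence $\mathcal{G}_j:=\{g_{\ibar}\}_{\ibar\in(I\cup J\setminus\{j\})^*}$ is a countably infinite self-similar IFS on $\R$. Its finite truncations $\mathcal{G}_j^{(N)}:=\{g_{\ibar}:|\ibar|\leq N\}$ have attractors contained in $f_j(\Lambda_\alpha)\subseteq\Lambda_\alpha$, and using \eqref{eq:sj} together with Lemma~\ref{lem:affdim2} (continuity and monotonicity of the relevant pressure sums), the similarity dimensions $s_N(\alpha)$ of $\mathcal{G}_j^{(N)}$, defined by $\sum_{|\ibar|\leq N}|a_{\ibar}(\alpha)|^{s_N(\alpha)}=1$, satisfy $s_N(\alpha)\nearrow s_j(\alpha)$ as $N\to\infty$.

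Since $\wvec_j(\alpha)=\wvec(c_j+\beta_j\alpha)$ and each $A_{j'}$, $j'\in J$, depends real-analytically on $\alpha$, the coefficients $a_{\ibar}(\alpha),b_{\ibar}(\alpha)$ of each $\mathcal{G}_j^{(N)}$ form a real-analytic one-parameter family of self-similar IFSs on $\R$. Hochman's theorem for such families~\cite{Hochman2014} then produces, for each $N$, an exceptional set $E_N\subset\R$ of packing (hence Hausdorff) dimension zero such that
\[
\dim_H\Lambda(\mathcal{G}_j^{(N)})=\min\{1,s_N(\alpha)\}\quad\text{for all }\alpha\in\R\setminus E_N.
\]
Setting $E:=\bigcup_N E_N$ gives $\dim_H(E)=0$, and for every $\alpha\in\R\setminus E$,
\[
\dim_H\Lambda_\alpha\;\geq\;\sup_N\dim_H\Lambda(\mathcal{G}_j^{(N)})\;=\;\min\{1,\sup_Ns_N(\alpha)\}\;=\;s(\mathcal{F}_\alpha),
\]
while the matching upper bound $\dim_H\Lambda_\alpha\leq s(\mathcal{F}_\alpha)$ is the standard affinity-dimension estimate from~\cite{kaenmaki2022non}.

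The principal technical obstacle is verifying the hypothesis of Hochman's parametric theorem for $\{\mathcal{G}_j^{(N)}\}_\alpha$: one must exclude distinct words $\ibar_1\neq\ibar_2\in(I\cup J\setminus\{j\})^{\leq N}$ for which $g_{\ibar_1}(\alpha)\equiv g_{\ibar_2}(\alpha)$ identically in $\alpha$. Because $\wvec_j(\alpha)$ traces out all of $\mathbb{S}^1$ as $\alpha$ varies (here $\beta_j>0$ is essential), such an identity forces the algebraic relations $(A_{\ibar_1}-A_{\ibar_2})\vvec_j=0$ together with a corresponding constraint $f_{\ibar_1}(\tvec_j)=f_{\ibar_2}(\tvec_j)$ on the translation data (with the analogous statement for words involving $J$-symbols). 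Mirroring the proof of Lemma~\ref{lem:norm}, by taking a suitable accumulation point of $A_{\ibar}^T/\|A_{\ibar}\|$ one extracts from the persistence of such identities a proper subspace of $\R^2$ invariant under $\{A_i\}_{i\in I}$, contradicting the irreducibility of $\mathcal{F}_{\mathrm{reg}}$; the translation part is controlled using the uniform convex separation condition. Carrying out this non-degeneracy check cleanly across the countable collection of word pairs of bounded length is the heart of the argument.
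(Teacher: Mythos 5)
Your overall reduction is exactly the paper's: conjugate the first-return system $\{f_j\circ f_{\ibar}\}_{\ibar\in(I\cup J\setminus\{j\})^*}$ to a one-dimensional self-similar family on the line $\tvec_j+\mathrm{Im}(A_j)$, truncate to finite word length, apply Hochman's parametric theorem, and let the similarity dimensions increase to $s_j(\alpha)$. However, the step you yourself identify as ``the heart of the argument'' --- verifying Hochman's hypothesis --- is where your proposal goes wrong, in two ways. First, you verify the wrong condition: Hochman's hypothesis is not that distinct generators $g_{\ibar_1},g_{\ibar_2}$ of the truncated alphabet are non-identical maps, but that $\Delta_{\ibar,\jbar}\not\equiv 0$ for all pairs of distinct \emph{infinite} sequences over that alphabet, i.e.\ that distinct infinite words have natural projections that do not coincide identically in $\alpha$. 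The paper proves precisely this (Lemma~\ref{lemma_for_Hochman}), and transfers it to the one-dimensional system via the conjugacy $\Pi_\alpha(j\ibar_1 j\ibar_2 j\cdots)=\pi_\alpha(\ibar_1\ibar_2\cdots)\vvec_j+\tvec_j$; the inserted $j$'s make the block decomposition recoverable from the concatenated word, so injectivity of $\Pi_\alpha$ on $\Sigma$ suffices. Distinctness of the generator maps alone does not give this.

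Second, your proposed mechanism for the non-degeneracy --- extracting an invariant subspace and contradicting irreducibility of $\mathcal{F}_{\mathrm{reg}}$, \`a la Lemma~\ref{lem:norm} --- cannot work. Irreducibility does not prevent two distinct products from agreeing on the single vector $\vvec_j$, nor does it say anything about the translation parts; indeed, without a separation hypothesis one can have exact overlaps for every $\alpha$ in perfectly irreducible systems. The paper's actual mechanism is geometric: writing $\Pi_\alpha(\ibar)-\Pi_\alpha(\jbar)=A^{(\alpha)}_{\ibar\wedge\jbar}\bigl(\Pi_\alpha(\sigma^{|\ibar\wedge\jbar|}\ibar)-\Pi_\alpha(\sigma^{|\ibar\wedge\jbar|}\jbar)\bigr)$, the uniform convex separation condition forces the second factor to be a nonzero vector separating two disjoint convex cylinder sets, and the key Lemma~\ref{CH8_convex} (disjoint compact convex sets have disjoint projections for an open set of directions) produces an $\alpha$ for which this vector avoids $\mathrm{Ker}(A^{(\alpha)}_{\ibar\wedge\jbar})$, whose direction rotates with $\alpha$ since $\beta_j>0$. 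Your sketch gestures at ``the translation part is controlled using the uniform convex separation condition'' but supplies no argument, and this is the substantive content. A smaller but real omission: the contraction ratios $a_{\ibar}(\alpha)=\rho_j\wvec(c_j+\beta_j\alpha)^TA_{\ibar}\vvec_j$ vanish at certain $\alpha$, violating Hochman's requirement that $\lambda_k$ map into $(-1,1)\setminus\{0\}$; the paper must excise the countable root set $R_n$ and work on compact subintervals before applying the theorem, and you would need to do the same.
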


First, let us show why Theorem~\ref{thm:main}\eqref{it:dimension} follows from Proposition~\ref{CH3_prop_dimNew}.

\begin{proof}[Proof of Theorem~\ref{thm:main}\eqref{it:dimension}]
	Let $\mathcal{F}_{\mathsf{w}}$ be an IFS satisfying the assumptions of Theorem~\ref{thm:main}. By K\"aenm\"aki and Nissinen \cite[Lemma~3.2]{kaenmaki2022non}, we have that
	$$
	\overline{\dim}_B(\Lambda)\leq s(\mathcal{F}_{\mathsf{w}})\text{ for every }\mathsf{w}\in\mathbb{T}^{\#J}.
	$$
	Thus, it is enough to verify the lower bound.
	
	Let us argue by contradiction. That is, suppose that there exists a set $E\subset\mathbb{T}^{\#J}$ with $\dim_HE>\#J-1$ such that 
	$$
	\dim_H(\Lambda_{\mathsf{w}})<s(\mathcal{F}_{\mathsf{w}})\text{ for every }\mathsf{w}\in E.
	$$
	By using the map $\mathsf{w}\colon[0,2\pi]^{\#J}\mapsto\mathbb{T}^{\#J}$ defined as
	$$
	\mathsf{w}(\alpha_1,\ldots,\alpha_{\#J})=(\wvec(\alpha_1),\ldots,\wvec(\alpha_{\#J})),
	$$
	we get that there exists a set $E'\subset[0,2\pi]^{\#J}$ with $\dim_HE'>\#J-1$ such that
	$$
	\dim_H(\Lambda_{\mathsf{w}(\underline{\alpha})})<s(\mathcal{F}_{\mathsf{w}(\underline{\alpha})})\text{ for every }\underline{\alpha}=(\alpha_1,\ldots,\alpha_{\#J})\in E'.
	$$
	
	For a $\zvec\in\mathbb{S}^{\#J-1}$, let us denote the orthogonal projection from $\R^{\#J}$ to the $\#J-1$-dimensional subspace $\langle\zvec\rangle^\perp$ by $\mathrm{proj}_{\zvec}$. Let us denote the spherical measure on $\mathbb{S}^{\#J-1}$ by $\sigma_{\#J-1}$. By Mattila's slicing theorem, see \cite[Theorem~10.10]{Mattila1995}
	\[
	\mathcal{L}_{\#J-1}(\{\yvec\in\mathrm{proj}_{\zvec}(E'):\dim_H(E'\cap\proj_{\zvec}^{-1}(\yvec))>0\})>0\text{ for $\sigma_{\#J-1}$-a.e. }\zvec.
	\]
	Let $\zvec\in\mathbb{S}^{\#J-1}$ and $\yvec\in\R^{\#J}$ be such that $\dim_H(E'\cap\proj_{\zvec}^{-1}(\yvec))>0$. Hence, there exists a set $E''\subset\R$ with $\dim_H(E'')>0$ such that for every $\alpha\in E''$
	$$
	\dim_H(\Lambda_{\mathsf{w}(\yvec+\alpha\zvec)})<s(\mathcal{F}_{\mathsf{w}(\yvec+\alpha\zvec)}),
	$$
	which contradicts to Proposition~\ref{CH3_prop_dimNew}.
\end{proof}

\subsection{Hochman's theorem} Let us recall a theorem of Hochman \cite[Corollary~1.2]{Hochman2014}, which will be used to prove Proposition~\ref{CH3_prop_dimNew}. Let $\mathcal{I} \subset \R$ be a compact interval, and let $K$ be a finite set of indices. For every $k\in K$, let $\lambda_k \colon \mathcal{I} \to (-1,1) \setminus \{0\}$ and $a_k \colon \mathcal{I} \to \R$ be real analytic mappings, and let $\Phi_\alpha=\{g_k^{(\alpha)}(x) = \lambda_k(\alpha)x + a_k(\alpha)\}_{k\in K}$ be a parametrized family of IFS of contracting similarities on the real line with parameters $\alpha\in\mathcal{I}$. For every $\ibar=(i_1,i_2,\ldots),\jbar=(j_1,j_2,\ldots)\in K^\N$ let
$$
\Delta_{\ibar, \jbar}(\alpha) = \sum_{k=1}^\infty t_{i_k}(\alpha)\prod_{\ell=1}^{k-1}\lambda_{i_\ell}(\alpha)-\sum_{k=1}^\infty t_{j_k}(\alpha)\prod_{\ell=1}^{k-1}\lambda_{j_\ell}(\alpha).
$$

\begin{theorem}[Hochman]\label{CH2_thm_Hoch2}
	Let $\mathcal{I} \subset \R$ be a compact interval, and let $K$ be a finite set of indices. For every $k\in K$, let $\lambda_k \colon \mathcal{I} \to (-1,1) \setminus \{0\}$ and $a_k \colon \mathcal{I} \to \R$ be real analytic mappings, and let $\Phi_\alpha=\{g_k^{(\alpha)}(x) = \lambda_k(\alpha)x + a_k(\alpha)\}_{k\in K}$ be a parametrized family of IFS of contracting similarities on the real line with parameters $\alpha\in\mathcal{I}$. Denote the attractor of $\Phi_\alpha$ by $\Gamma_\alpha$. Suppose that 
	\begin{gather*}
		\Delta_{\ibar,\jbar} \equiv 0 \text{ on } \mathcal{I} \text{ if and only if } \ibar = \jbar.
	\end{gather*}
	Then there exists a set $E$ with $\dim_PE=0$ such that for every $\alpha\in\mathcal{I}\setminus E$
	$$
	\dim_H(\Gamma_\alpha)=\dim_B(\Gamma_\alpha)=\min\{1,s_0(\alpha)\},\text{ where }\sum_{k\in K}|\lambda_k(\alpha)|^{s_0(\alpha)}=1.
	$$
\end{theorem}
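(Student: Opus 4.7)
The plan is to reduce the problem to a one-dimensional similarity IFS on the image line $L_j$ of a fixed non-invertible map $f_j$ (any $j\in J$), and to apply Hochman's Theorem~\ref{CH2_thm_Hoch2} to an increasing sequence of truncated finite sub-systems whose similarity dimensions exhaust $s(\mathcal{F}_\alpha)$ in the limit.

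Fix $j\in J$ and identify $L_j := \tvec_j + \la\vvec_j\ra$ with $\R$ via $r\mapsto \tvec_j + r\vvec_j$. A direct calculation shows that for every $\ibar\in(I\cup J\setminus\{j\})^*$, the composition $f_j\circ f_{\ibar}$ maps $\R^2$ into $L_j$ and, under the identification, acts on $\R$ as the affine similarity
\[
g_{\ibar}^{(\alpha)}(r) = \lambda_{\ibar}(\alpha)\,r + a_{\ibar}(\alpha), \qquad \lambda_{\ibar}(\alpha) = \rho_j\la\wvec(c_j+\beta_j\alpha),\,A_{\ibar}(\alpha)\vvec_j\ra,
\]
with $a_{\ibar}$ real analytic in $\alpha$ and $|\lambda_{\ibar}(\alpha)| = \|A_j A_{\ibar}\mid\mathrm{Im}(A_j)\|$. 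For each $n\geq 1$, let $K_n := \{\ibar\in(I\cup J\setminus\{j\})^*:|\ibar|\leq n,\ \lambda_{\ibar}\not\equiv 0\}$ and consider the finite similarity IFS $\Phi_n^{(\alpha)}:=\{g_{\ibar}^{(\alpha)}\}_{\ibar\in K_n}$ on $\R$, with attractor $\Gamma_n(\alpha)$ and similarity dimension $s_0^{(n)}(\alpha)$ determined by $\sum_{\ibar\in K_n}|\lambda_{\ibar}(\alpha)|^{s_0^{(n)}(\alpha)}=1$. Lemma~\ref{lem:affdim2} and monotone convergence together give $\min\{1,s_0^{(n)}(\alpha)\}\nearrow \min\{1,s_j(\alpha)\} = s(\mathcal{F}_\alpha)$ pointwise in $\alpha$; moreover, since every $g_{\ibar}^{(\alpha)}$ is a composition of original IFS maps, the embedded image of $\Gamma_n(\alpha)$ lies in $\Lambda_\alpha$, so any lower bound on $\dim_H\Gamma_n(\alpha)$ transfers to $\dim_H\Lambda_\alpha$.

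To invoke Theorem~\ref{CH2_thm_Hoch2}, I would verify its non-degeneracy hypothesis: distinct $\omega,\omega'\in K_n^{\N}$ unfold letter-by-letter into distinct words in $(I\cup J)^{\N}$ (each letter of $K_n$ being a finite word of bounded length in the original alphabet, the unfolding is uniquely determined), and uniform convex separation makes the coding map of $\mathcal{F}_\alpha$ injective for every $\alpha$; hence $\Delta_{\omega,\omega'}(\alpha)\neq 0$ for every $\alpha$, so in particular $\Delta_{\omega,\omega'}\not\equiv 0$ on any interval. Covering $\R$ by countably many compact intervals $\{\mathcal{I}_m\}_{m\in\N}$ and further splitting each to avoid the discrete set of $\alpha$ where some $\lambda_{\ibar}$ vanishes, Theorem~\ref{CH2_thm_Hoch2} applied to each pair $(n,m)$ produces an exceptional set $E_{n,m}\subset\mathcal{I}_m$ with $\dim_P E_{n,m}=0$ such that
\[
\dim_H\Lambda_\alpha \;\geq\; \dim_H\Gamma_n(\alpha) \;=\; \min\{1,s_0^{(n)}(\alpha)\} \qquad\text{for every }\alpha\in\mathcal{I}_m\setminus E_{n,m}.
\]

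Setting $E$ to be the union over $n,m$ of the $E_{n,m}$ together with the countable set of $\alpha$ where some $\lambda_{\ibar}$ vanishes, we obtain $\dim_H E\leq\dim_P E=0$; sending $n\to\infty$ yields $\dim_H\Lambda_\alpha\geq s(\mathcal{F}_\alpha)$ for every $\alpha\notin E$, and combined with the upper bound $\dim_H\Lambda_\alpha\leq s(\mathcal{F}_\alpha)$ from \cite{kaenmaki2022non} this completes the proof. The main obstacle I anticipate is the clean verification of Hochman's non-degeneracy $\Delta_{\omega,\omega'}\not\equiv 0$, which rests on injectivity of the full coding map of $\mathcal{F}_\alpha$ under uniform convex separation and on unambiguous parsing of induced words back into the original alphabet; the bookkeeping for words with identically vanishing $\lambda_{\ibar}$ (discarded from $K_n$) and parameters on which a nonzero $\lambda_{\ibar}$ vanishes (a discrete, hence $\dim_H$-zero set) is a minor but unavoidable technicality.
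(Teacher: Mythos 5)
Your proposal does not prove the statement it was asked to prove. The statement is Hochman's theorem itself (Theorem~\ref{CH2_thm_Hoch2}): for a real-analytic one-parameter family of self-similar IFSs on the line satisfying the non-degeneracy condition $\Delta_{\ibar,\jbar}\equiv 0 \Leftrightarrow \ibar=\jbar$, there is a packing-dimension-zero exceptional set of parameters outside of which the Hausdorff and box dimensions of the attractor equal the similarity dimension. In the paper this is an imported result, quoted from Hochman \cite[Corollary~1.2]{Hochman2014}; its proof rests on the inverse theorem for entropy of convolutions and an exponential-separation/transversality argument for the functions $\Delta_{\ibar,\jbar}$, none of which appears in your write-up. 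What you have written instead is, almost step for step, the paper's proof of Proposition~\ref{CH3_prop_dimNew}: the reduction to the image line $L_j$ of a non-invertible map, the induced one-dimensional similarity systems $g_{\ibar}^{(\alpha)}$ indexed by $(I\cup J\setminus\{j\})^*$, the truncated finite sub-systems whose similarity dimensions increase to $s(\mathcal{F}_\alpha)$ via Lemma~\ref{lem:affdim2}, and the bookkeeping over parameters where some contraction ratio vanishes.

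The decisive symptom is that your argument explicitly invokes Theorem~\ref{CH2_thm_Hoch2} ("To invoke Theorem~\ref{CH2_thm_Hoch2}, I would verify its non-degeneracy hypothesis\dots"). A proof of a theorem cannot take that same theorem as an input; as a proof of the stated result your argument is circular, and as written it establishes only the downstream application. If the intended target really were Proposition~\ref{CH3_prop_dimNew}, your outline would be essentially the paper's own argument (with the injectivity of the coding map under uniform convex separation supplied by Lemma~\ref{lemma_for_Hochman} and Lemma~\ref{CH8_convex}); but for Theorem~\ref{CH2_thm_Hoch2} itself you would need to either reproduce Hochman's entropy-based proof or simply cite it, and your proposal does neither.
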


\subsection{Verifying Hochman's condition}

Recall that for a $\zvec\in\mathbb{S}^{1}$, we denote the orthogonal projection from $\R^{2}$ to the $1$-dimensional subspace $\langle\zvec\rangle^\perp$ by $\mathrm{proj}_{\zvec}$. The following is our main geometric lemma.

\begin{lemma}
	\label{CH8_convex}
	Let $A$ and $B$ convex compact sets, such that $A \cap B  = \emptyset$. If the Convex Separation Condition holds,
	then there exists an open set $O \subset \mathbb{S}^1$, such that 
	\begin{gather*}
		\mathrm{proj}_{\zvec}(A) \cap \mathrm{proj}_{\zvec}(B) = \emptyset\text{ for every }\zvec\in O.
	\end{gather*}
\end{lemma}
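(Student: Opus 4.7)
The key observation is that $\proj_{\zvec}(A)\cap\proj_{\zvec}(B)=\emptyset$ if and only if $A$ and $B$ can be strictly separated by an affine line whose direction is parallel to $\zvec$. Once this translation is made, the lemma reduces to the classical strict separating hyperplane theorem together with an elementary continuity argument to obtain openness.

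First, I would invoke the strict separating hyperplane theorem. Since $A$ and $B$ are disjoint compact convex subsets of $\R^2$, there exists a unit normal $\nu_0\in\mathbb{S}^1$ such that
\[
M_A(\nu_0):=\max_{a\in A}\langle a,\nu_0\rangle<\min_{b\in B}\langle b,\nu_0\rangle=:m_B(\nu_0),
\]
with the gap $m_B(\nu_0)-M_A(\nu_0)>0$. Since $A$ and $B$ are compact, the maps $\nu\mapsto M_A(\nu)$ and $\nu\mapsto m_B(\nu)$ are continuous on $\mathbb{S}^1$, so the strict inequality $M_A(\nu)<m_B(\nu)$ persists on an open neighbourhood $N\subset\mathbb{S}^1$ of $\nu_0$.

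Next, I would let $O$ be the image of $N$ under rotation by $\pi/2$, so that every $\zvec\in O$ corresponds to a $\nu\in N$ with $\nu\perp\zvec$. For any $x\in\R^2$, since $\langle\zvec,\nu\rangle=0$,
\[
\langle\proj_{\zvec}(x),\nu\rangle=\langle x-\langle x,\zvec\rangle\zvec,\nu\rangle=\langle x,\nu\rangle.
\]
Hence the $\nu$-coordinates of $\proj_{\zvec}(A)$ are contained in $(-\infty,M_A(\nu)]$ and the $\nu$-coordinates of $\proj_{\zvec}(B)$ are contained in $[m_B(\nu),+\infty)$, which are disjoint by the choice of $N$. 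Therefore $\proj_{\zvec}(A)\cap\proj_{\zvec}(B)=\emptyset$ for every $\zvec\in O$, and $O$ is open as the rotated image of an open set.

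The proof has essentially no hard step; the only potential pitfall is bookkeeping the distinction between the direction $\zvec$ of projection and the normal $\nu=R_{\pi/2}\zvec$ of the corresponding separating line, which I would be careful to present explicitly so that the identification $\zvec\in O\Leftrightarrow\nu\in N$ is unambiguous.
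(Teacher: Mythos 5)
Your proof is correct, but it takes a genuinely different route from the paper's. You invoke the strict separating hyperplane theorem for disjoint compact convex sets to get a normal $\nu_0$ with $\max_{a\in A}\langle a,\nu_0\rangle<\min_{b\in B}\langle b,\nu_0\rangle$, use continuity of the support functions to make this an open condition in $\nu$, and then observe that $\proj_{\zvec}$ preserves the $\nu$-coordinate whenever $\nu\perp\zvec$ — a clean reduction to standard convexity theory, and one that works verbatim in any dimension (projecting along a line orthogonal to a separating hyperplane). The paper instead argues from scratch and stays entirely in the plane: it forms the compact cone $\mathcal{C}\subset\mathbb{S}^1$ of normalized difference vectors $\underline{a}-\underline{b}$, notes that $\proj_{\zvec}(A)\cap\proj_{\zvec}(B)\neq\emptyset$ exactly when $\zvec\in\mathcal{C}\cup(-\mathcal{C})$, and shows the open complement is nonempty by contradiction: if $\mathcal{C}\cup(-\mathcal{C})=\mathbb{S}^1$ then $\mathcal{C}\cap(-\mathcal{C})\neq\emptyset$, which produces points $\underline{a}_1,\underline{a}_2\in A$, $\underline{b}_1,\underline{b}_2\in B$ forming a (possibly degenerate) trapeze whose diagonals $[\underline{a}_1,\underline{a}_2]\subset A$ and $[\underline{b}_1,\underline{b}_2]\subset B$ must cross, contradicting $A\cap B=\emptyset$. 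Your argument is shorter and more robust because it outsources the hard step to a classical theorem; the paper's is self-contained and makes the exceptional set of directions ($\mathcal{C}\cup(-\mathcal{C})$) explicit. Both are complete proofs; note also that the hypothesis ``the Convex Separation Condition holds'' in the statement plays no role in either argument — the lemma is purely about two disjoint compact convex sets.
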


\begin{proof}
	Let $\vec{AB} = \{ \underline{a} - \underline{b} :  \underline{a} \in A \text{ and } \underline{b} \in B \}$ be the set of every vectors directing from $A$ to $B$. Then let us define cone 
	$$
	\mathcal{C} = \left\{ \frac{\underline{v}}{\norm{\underline{v}}} \text{: } \underline{v} \in \vec{AB} \right\}\text{ and }-\mathcal{C}=\left\{ -\frac{\underline{v}}{\norm{\underline{v}}} \text{: } \underline{v} \in \vec{AB} \right\}.
	$$
	Then $\mathcal{C}$ and $-\mathcal{C}$ are closed and compact. It is enough to show that the open set $\mathbb{S}^1\setminus(\mathcal{C}\cup-\mathcal{C})$ is non-empty. So $\mathrm{proj}_{\wvec}(A) \cap \mathrm{proj}_{\wvec}(B) = \emptyset$ for every $\wvec\in O$.
	
	If there is no such $\underline{w}$, i.e. $\mathcal{C}\cup-\mathcal{C}=\mathbb{S}^1$, then $\mathcal{C}\cap-\mathcal{C}\neq\emptyset$. Let $\wvec\in\mathcal{C}\cap-\mathcal{C}$. Let $\underline{a}_1,\underline{a}_2\in A$ and $\underline{b}_1,\underline{b}_2\in B$ be point such that,
	\begin{gather*}
		\underline{w} = \frac{\underline{a}_1-\underline{b}_1}{\norm{\underline{a}_1-\underline{b}_1}}= \frac{\underline{b}_2-\underline{a}_2}{\norm{\underline{b}_2-\underline{a}_2}}.
	\end{gather*}
	By connecting the endpoints  we might define a (possibly degenerate) trapeze.
	\begin{figure}[H]
		\centering
		\begin{tikzpicture}
			\draw coordinate (A1) at (-0.5,1.5);
			\node at (A1) [font=\tiny, right = 0.2mm] {$\underline{a}_1$};
			\draw coordinate (A2) at (-2.5,2);
			\node at (A2) [font=\tiny, above = 0.2mm] {$\underline{b}_1$};
			\draw coordinate (B1) at (-1,0.4);
			\node at (B1) [font=\tiny, below = 0.2mm] {$\underline{a}_2$};
			\draw coordinate (B2) at (0.5,-0.1);
			\node at (B2) [font=\tiny, right = 0.2mm] {$\underline{b}_2$};
			
			\draw[->, red, line width=0.3mm] (-0.5,1.5) -- (-2.5,2);
			\draw[->, red, line width=0.3mm] (-1,0.4) -- (0.5,-0.1);
			\draw[dashed, line width=0.3mm] (-0.5,1.5) -- (0.5,-0.1);
			\draw[dashed, line width=0.3mm] (-2.5,2) -- (-1,0.4);
			\draw[dashed, green, line width=0.3mm] (-0.5,1.5) -- (-1,0.4);
			\draw[dashed, blue, line width=0.3mm] (-2.5,2) -- (0.5,-0.1);
		\end{tikzpicture}
	\end{figure}
	Let $P_b =\{t\underline{b}_1+(1-t)\underline{b}_2:t\in[0,1]\}$ be the line between $\underline{b}_1$ and $\underline{b}_2$ and let 
	$P_a = \{t\underline{a}_1+(1-t)\underline{a}_2:t\in[0,1]\}$  be the line between $\underline{a}_1$ and $\underline{a}_2$. The diagonals of a (possibly degenerate) trapeze always intersect each other and so $P_a\cap P_b\neq\emptyset$. However,	since $A$ and $B$ are convex closed sets then, $P_b(x,y) \subset B$ and $P_a(x,y) \subset A$ which implies that, these lines cannot intersect each other, which is a contradiction.
\end{proof}

Let $\mathcal{F}_\alpha$ be the IFS defined in \eqref{eq:specsystem} satisfying the conditions of Proposition~\ref{CH3_prop_dimNew}. For simplicity, let us denote by $A_j^{(\alpha)}$ the matrices $\rho_j\vvec_j\wvec(c_j+\beta_j\alpha)^T$ for $j\in J$, and the products by $A_{\ibar}^{(\alpha)}$ for $\ibar\in(I\cup J)^*$ to emphasize its possible dependence on $\alpha$.

Let us define the natural projection $\Pi_\alpha$ from the symbolic space $\Sigma=(I\cup J)^\N$ to the attractor $\Lambda_\alpha$ of $\mathcal{F}_\alpha$ by
$$
\Pi_\alpha(\ibar)=\lim_{n\to\infty}f_{i_1}\circ\cdots\circ f_{i_n}(\underline{0}),
$$
for $\ibar=(i_1,i_2,\ldots)\in\Sigma$. For any $\ibar,\jbar\in\Sigma$, let $|\ibar\wedge\jbar|=\min\{k\geq1:i_k\neq j_k\}-1$, and let $\ibar\wedge\jbar=(i_1,\ldots,i_{|\ibar\wedge\jbar|})$. Denote the left-shift operator on $\Sigma$ by $\sigma$. Then clearly
$$
\Pi_\alpha(\ibar)=f_{i_1}(\Pi_{\alpha}(\sigma\ibar)).
$$

\begin{lemma}
	\label{lemma_for_Hochman}
Let $\mathcal{F}_\alpha$ be the IFS defined in \eqref{eq:specsystem} satisfying the conditions of Proposition~\ref{CH3_prop_dimNew}. Then
	\begin{gather*}
		\Pi_{\alpha}(\ibar) \equiv \Pi_{\alpha}(\jbar) \text{ for every } \alpha\in\R \text{ if and only if } \ibar=\jbar \in \Sigma.
	\end{gather*}
\end{lemma}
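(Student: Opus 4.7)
The plan is to prove the contrapositive by induction on the common prefix length $k=|\ibar\wedge\jbar|$, which is finite whenever $\ibar\neq\jbar$. The base case $k=0$ is handled directly by uniform convex separation: $i_1\neq j_1$ gives $\bigl(\bigcup_\alpha f_{i_1}^{(\alpha)}(U)\bigr)\cap\bigl(\bigcup_\alpha f_{j_1}^{(\alpha)}(U)\bigr)=\emptyset$, which is incompatible with $\Pi_{\alpha}(\ibar)=\Pi_{\alpha}(\jbar)$ for any $\alpha$.

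For the inductive step with common prefix length $k\ge 1$, first consider $i_1=j_1\in I$: invertibility of $f_{i_1}$ strips the initial symbol and yields $\Pi_\alpha(\sigma\ibar)\equiv\Pi_\alpha(\sigma\jbar)$ with common prefix length $k-1$, so induction closes the case. The substantive situation is $i_1=j_1=i\in J$. Using $A_i^{(\alpha)}=\rho_i\vvec_i\wvec(c_i+\beta_i\alpha)^T$, $f_i(\Pi_\alpha(\sigma\ibar))=f_i(\Pi_\alpha(\sigma\jbar))$, and $\Pi_\alpha(\sigma\ibar)=f_{i_2\cdots i_k}(\Pi_\alpha(\sigma^k\ibar))$ (similarly for $\jbar$), I extract the functional identity
\[
\wvec(c_i+\beta_i\alpha)\cdot v(\alpha)\equiv 0,\qquad v(\alpha):=A_{i_2\cdots i_k}^{(\alpha)}\bigl(\Pi_\alpha(\sigma^k\ibar)-\Pi_\alpha(\sigma^k\jbar)\bigr).
\]
The difference $D_k(\alpha):=\Pi_\alpha(\sigma^k\ibar)-\Pi_\alpha(\sigma^k\jbar)$ is nonzero for every $\alpha$, since $\Pi_\alpha(\sigma^k\ibar)$ and $\Pi_\alpha(\sigma^k\jbar)$ lie in the disjoint first-level cylinders $f_{i_{k+1}}^{(\alpha)}(U)$ and $f_{j_{k+1}}^{(\alpha)}(U)$.

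The proof then splits on whether $v\equiv 0$. If $v\equiv 0$, then $f_{i_2\cdots i_k}(\Pi_\alpha(\sigma^k\ibar))\equiv f_{i_2\cdots i_k}(\Pi_\alpha(\sigma^k\jbar))$, i.e., $\Pi_\alpha(\sigma\ibar)\equiv\Pi_\alpha(\sigma\jbar)$; since $\sigma\ibar\neq\sigma\jbar$ has common prefix length $k-1$, induction supplies the contradiction. Otherwise $v\not\equiv 0$, and the main obstacle is to rule out any $i_m\in J$ with $2\le m\le k$. Supposing such an $m$ exists and letting $p'$ be the smallest one, $A_{i_2\cdots i_{p'-1}}$ is a constant invertible matrix and $A_{i_{p'}}^{(\alpha)}$ has image $\langle\vvec_{i_{p'}}\rangle$, so the image of $A_{i_2\cdots i_k}^{(\alpha)}$ is contained in the fixed line $L:=A_{i_2\cdots i_{p'-1}}\langle\vvec_{i_{p'}}\rangle$, independent of $\alpha$. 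Hence $v(\alpha)\in L$; but $\beta_i>0$ forces $\wvec(c_i+\beta_i\alpha)$ to sweep $\mathbb{S}^1$, so outside a discrete set of $\alpha$ one has $L\not\perp\wvec(c_i+\beta_i\alpha)$, and then $v(\alpha)\in L$ together with $v(\alpha)\perp\wvec(c_i+\beta_i\alpha)$ forces $v(\alpha)=0$. Real-analyticity of $v$ in $\alpha$ then yields $v\equiv 0$, contradicting this case.

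It follows that $i_2,\ldots,i_k\in I$, so $C:=A_{i_2\cdots i_k}$ is a constant invertible matrix. Then outside a discrete set of $\alpha$ one has $D_k(\alpha)=C^{-1}v(\alpha)$ parallel to $C^{-1}\wvec(c_i+\beta_i\alpha)^\perp$, whose direction sweeps all of $\mathbb{S}^1$ as $\alpha$ varies. Applying Lemma~\ref{CH8_convex} to the disjoint convex compacta $\bigcup_\alpha f_{i_{k+1}}^{(\alpha)}(U)$ and $\bigcup_\alpha f_{j_{k+1}}^{(\alpha)}(U)$ produces an open set $O\subset\mathbb{S}^1$ of projection directions separating them. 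Choosing $\alpha$ so that $D_k(\alpha)$ is parallel to some $\zvec\in O$ then gives $\mathrm{proj}_\zvec(\Pi_\alpha(\sigma^k\ibar))=\mathrm{proj}_\zvec(\Pi_\alpha(\sigma^k\jbar))$, which contradicts the disjointness of their $\mathrm{proj}_\zvec$-images.
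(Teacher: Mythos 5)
Your argument is correct, and its endgame is the same as the paper's: everything reduces to the fact that the relevant kernel is a line of the form $\langle A_{\ibar'}^T\wvec(c+\beta\alpha)\rangle^\perp$ with $\ibar'\in I^*$, which sweeps all of $\mathbb{RP}^1$ as $\alpha$ varies, so that Lemma~\ref{CH8_convex}, applied to the two disjoint convex compacta $\bigcup_\alpha f_{i_{k+1}}(U)$ and $\bigcup_\alpha f_{j_{k+1}}(U)$, yields a parameter at which the difference of the tails escapes the kernel. Where you genuinely differ is in the reduction. The paper treats the whole prefix product $A_{\ibar\wedge\jbar}^{(\alpha)}$ at once and observes that, outside a discrete set of $\alpha$ where the product vanishes, its kernel equals $\mathrm{Ker}(A_{i_k}^{(\alpha)}A_{\ibar'})$ for the \emph{last} symbol $i_k\in J$ in the prefix and the $I^*$-suffix $\ibar'$ following it; no induction is required. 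You instead peel off the \emph{first} symbol and induct on the prefix length, which obliges you to eliminate intermediate $J$-symbols by the separate rigidity step (the vector $v(\alpha)$ is confined to a fixed line and orthogonal to a rotating direction, hence vanishes off a discrete set, hence identically by continuity --- analyticity is not even needed --- after which the prefix shortens). Your route costs an induction and this extra sub-argument, but it is complete and makes the degenerate rank-zero case more explicit than the paper's brief remark that it only occurs for a discrete, countable set of parameters. Both proofs are valid.
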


\begin{proof}
	Let $U\subset\R^2$ be the compact convex set with respect to the uniform convex separation condition holds. Let $\ibar,\jbar\in\Sigma$ be such that $\ibar\neq\jbar$. Then by using the linearity of the maps of $\mathcal{F}_\alpha$,
	$$
	\Pi_{\alpha}(\ibar)-\Pi_{\alpha}(\jbar)=A_{\ibar\wedge\jbar}^{(\alpha)}\left(\Pi_{\alpha}(\sigma^{|\ibar\wedge\jbar|}\ibar)-\Pi_{\alpha}(\sigma^{|\ibar\wedge\jbar|}\jbar)\right).
	$$
	By the uniform convex separation condition we get that $\Pi_{\alpha}(\sigma^{|\ibar\wedge\jbar|}\ibar)-\Pi_{\alpha}(\sigma^{|\ibar\wedge\jbar|}\jbar)\neq\underline{0}$ for every $\alpha\in\R$. Hence, 
	$$
	\Pi_{\alpha}(\ibar) \equiv \Pi_{\alpha}(\jbar)\text{ if and only if }\Pi_{\alpha}(\sigma^{|\ibar\wedge\jbar|}\ibar)-\Pi_{\alpha}(\sigma^{|\ibar\wedge\jbar|}\jbar)\in\mathrm{Ker}(A_{\ibar\wedge\jbar}^{(\alpha)})\text{ for all }\alpha\in\R.
	$$
	For a fixed $\alpha\in\R$, there are three possibilities: 
 %either $\mathrm{rank}(A_{\ibar\wedge\jbar}^{(\alpha)})=2$ (i.e. $\ibar\wedge\jbar\in I^*$) but then $\mathrm{Ker}(A_{\ibar\wedge\jbar}^{(\alpha)})=\{\underline{0}\}$, which cannot happen; or $\mathrm{rank}(A_{\ibar\wedge\jbar}^{(\alpha)})=1$ then $\mathrm{Ker}(A_{\ibar\wedge\jbar}^{(\alpha)})=\mathrm{Ker}(A_{i_k}^{(\alpha)}A_{\ibar'})$, where $k=\max\{1\leq\ell\leq|\ibar\wedge\jbar|:i_{\ell}\in J\}$ and $\ibar'\in I^{|\ibar\wedge\jbar|-k-1}$; or $\mathrm{Ker}(A_{\ibar\wedge\jbar}^{(\alpha)})=\R$, namely, there exists $1\leq k<\ell\leq|\ibar\wedge\jbar|$ and $\ibar'\in I^*$ such that
	%$\mathrm{Im}(A_{\ibar'}^{(\alpha)}A_{i_{\ell}})=\mathrm{Ker}(A_{i_k}^{(\alpha)})$ and $i_\ell,i_k\in J$.

    \begin{enumerate}[label = (\Roman*.)]
        \item $\mathrm{rank}(A_{\ibar\wedge\jbar}^{(\alpha)})=2$ (i.e. $\ibar\wedge\jbar\in I^*$) but then $\mathrm{Ker}(A_{\ibar\wedge\jbar}^{(\alpha)})=\{\underline{0}\}$, which cannot happen;
        \item $\mathrm{rank}(A_{\ibar\wedge\jbar}^{(\alpha)})=1$ then $\mathrm{Ker}(A_{\ibar\wedge\jbar}^{(\alpha)})=\mathrm{Ker}(A_{i_k}^{(\alpha)}A_{\ibar'})$, where $k=\max\{1\leq\ell\leq|\ibar\wedge\jbar|:i_{\ell}\in J\}$ and $\ibar'\in I^{|\ibar\wedge\jbar|-k-1}$ is the suffix of $\ibar\wedge\jbar$;
        \item $\mathrm{Ker}(A_{\ibar\wedge\jbar}^{(\alpha)})=\R$, namely, there exists $1\leq k<\ell\leq|\ibar\wedge\jbar|$ and $\ibar'\in I^*$ such that
	    $\mathrm{Im}(A_{\ibar'}^{(\alpha)}A_{i_{\ell}})=\mathrm{Ker}(A_{i_k}^{(\alpha)})$ and $i_\ell,i_k\in J$.
    \end{enumerate}
	
	By the definition of $A_j^{(\alpha)}=\rho_j\vvec_j\wvec(c_j+\beta_j\alpha)^T$ for $j\in J$, $\mathrm{Im}(A_{\ibar'}A_{j}^{(\alpha)})$ is independent of $\alpha$ for any $\ibar'\in I^*$ and $j\in J$, it is clear that the set $\{\beta\in\R:\mathrm{Ker}(A_{\ibar\wedge\jbar}^{(\alpha)})=\R\}$ is a discrete and countable set for every $\ibar\neq\jbar\in\Sigma$. Hence, it is enough to check that for every $\ibar\in I^*$, $j\in J$ and $k_1\neq k_2\in I\cup J$ there exists $\alpha\in\R$ such that  
	\begin{equation}\label{eq:enough}
	\proj_{A_{\ibar}^T\underline{w}(c_j+\alpha\beta_j)}\left(f_{k_1}(U)\right)\cap\proj_{A_{\ibar}^T\underline{w}(c_j+\alpha\beta_j)}\left(f_{k_2}(U)\right)=\emptyset.
	\end{equation}
	But by Lemma~\ref{CH8_convex}, there exists an open set of $\alpha\in\R$ such that \eqref{eq:enough} holds, which completes the proof.
\end{proof}

\begin{proof}[Proof of Proposition \ref{CH3_prop_dimNew}]
	Let $j\in J$ be arbitrary but fixed. For every $\ibar\in(I\cup J\setminus\{j\})^*$, let us define a map $g_{\ibar}^{(\alpha)}\colon\R\mapsto\R$ such that 
	\begin{equation}\label{eq:gdef}
	g_{\ibar}^{(\alpha)}(x):=\rho_j\wvec(c_j+\alpha\beta_j)^TA_{\ibar}\vvec_j\cdot x+\rho_j\wvec(c_j+\alpha\beta_j)^Tf_{\ibar}(\tvec_j).
	\end{equation}
	In particular, 
	\begin{equation}\label{eq:g}
		f_{j\ibar}(x\vvec_j+\underline{t}_j)=g_{\ibar}^{(\alpha)}(x)\vvec_j+\underline{t}_j\text{  for every $x\in\R$.}
	\end{equation}
	Furthermore, $\|A_jA_{\ibar}|\mathrm{Im}(A_j)\|=\left|\rho_j\wvec(c_j+\alpha\beta_j)^TA_{\ibar}\vvec_j\right|$ for every $\ibar\in(I\cup J\setminus\{j\})^*$.
		
	For every $n\in\N$, let us define an IFS on the real line as $\mathcal{G}^{n}_{\alpha}=\{g_{\ibar}^{(\alpha)}\}_{\ibar\in\bigcup_{k=0}^n(I\cup J\setminus\{j\})^k}$. Let us denote the attractor of $\mathcal{G}^{n}_{\alpha}$ by $\Gamma_{n,\alpha}$. Let $\pi_\alpha\colon\left((\bigcup_{k=0}^nI\cup J\setminus\{j\})^k\right)^\N\mapsto\Gamma_{n,\alpha}$ be the natural projection associated to the IFS $\mathcal{G}^{n}_{\alpha}$. Then by \eqref{eq:g},
	\begin{equation}\label{eq:conj}
	\Pi_\alpha(j\ibar_1j\ibar_2j\cdots)=\pi_\alpha(\ibar_1\ibar_2\cdots)\vvec_j+\tvec_j.
	\end{equation}
	By defining the bi-Lipschitz mapping $h\colon\R\mapsto\R^2$ as $h(x)=x\vvec_j+\tvec_j$, we see that $h(\Gamma_{n,\alpha})\subset\Lambda_\alpha$ for every $n\in\N$ and $\alpha\in\R$. Moreover, combining Lemma~\ref{lemma_for_Hochman} with \eqref{eq:conj}, we see that 
	$$
	\pi_\alpha(\ibar_1\ibar_2\cdots)\equiv\pi_\alpha(\jbar_1\jbar_2\cdots)\text{ if and only if }\ibar_k=\jbar_k\text{ for every }k=1,2,\ldots.
	$$
	
	Clearly, the contraction ratios and the translation parameters of the maps in $\mathcal{G}^{n}_{\alpha}$ are analytic maps of $\alpha\in\R$. Let $R_n$ be the set of roots of the contraction ratios of the maps in $\mathcal{G}^{n}_{\alpha}$. Then for every $a\in\N$, the set $(-a,a)\cap R_N$ is finite. Let $I_1^a,\ldots,I_N^a$ be disjoint open subintervals of $(-a,a)$ such that $\|A_jA_{\ibar}|\mathrm{Im}(A_j)\|\neq0$ for every $\alpha\in I_k$ and every $k=1,\ldots,L$. For every $k=1,\ldots,L$ and $\ell\in\N$, let $J_{k,\ell}^a\subseteq I_k^a$ be compact intervals such that $\bigcup_{\ell=1}^\infty J_{k,\ell}^a=I_k^a$.
	
	Applying Theorem~\ref{CH2_thm_Hoch2}, there exists $E_{k,\ell}^a\subset J_{k,\ell}^a$ such that $\dim_H(E_{k,\ell}^a)=0$ and
	$$
	\dim_H(\Gamma_{n,\alpha})=s^{(n)}(\alpha)\text{ for every $\alpha\in J_{k,\ell}^a\setminus E_{k,\ell}^a$},
	$$
	where $s^{(n)}(\alpha)$ is the similarity dimension of $\mathcal{G}^{n}_{\alpha}$, that is,
	$$
	\smashoperator{\sum_{\ibar\in\bigcup_{k=0}^n(I\cup J\setminus\{j\})^k}}\|A_jA_{\ibar}|\mathrm{Im}(A_j)\|^{s^{(n)}(\alpha)}=1.
	$$
	By Lemma~\ref{lem:affdim} and \eqref{eq:sj}, $\lim_{n\to\infty}s^{(n)}(\alpha)=s(\mathcal{F}_{\alpha})$. Hence, by choosing $E:=\bigcup_{n=1}^\infty R_n\cup\bigcup_{a,k,\ell}E_{k,\ell}^a$, the claim of the proposition follows.
\end{proof}

\section{Exceptional parameters}

The remaining part of the paper is devoted to prove Theorem~\ref{thm:main}\eqref{it:exception}. Let $\mathcal{F}_{\mathsf{w}}$ be a family of affine IFSs as in \eqref{eq:IFSdef} with attractor $\Lambda_{\mathsf{w}}$. Suppose that $\sup_{\mathsf{w}}s(\mathcal{F}_{\mathsf{w}})<1$ and $\mathcal{F}_{\mathsf{w}}$ satisfies the convex separation condition uniformly. 

\begin{lemma}\label{lem:commonfix}
	Let us fix $j\in J$ and $i\in I\cup J$ such that $i\neq j$. Then there exists $\wvec_j\in\mathbb{S}^1$ such that
	$f_{j}$ and $f_j\circ f_i$ share the same fixed point. In particular, $f_{j}\circ f_j\circ f_i\equiv f_j\circ f_i\circ f_{j}$.
\end{lemma}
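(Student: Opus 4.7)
The plan is to reduce the shared fixed point condition to a single scalar equation on $\wvec_j\in\mathbb{S}^1$ and then to solve it by an antipodal intermediate value argument. Both $f_j$ and $f_j\circ f_i$ have image contained in the line $L_j:=\tvec_j+\R\vvec_j$, so their unique fixed points lie on $L_j$. Parametrizing $L_j$ by $s\mapsto\tvec_j+s\vvec_j$ and solving the fixed-point equations, the fixed points correspond to
\[
s_1(\wvec_j)=\frac{\rho_j\,\wvec_j^T\tvec_j}{1-\rho_j\,\wvec_j^T\vvec_j},\qquad s_2(\wvec_j)=\frac{\rho_j\,\wvec_j^T f_i(\tvec_j)}{1-\rho_j\,\wvec_j^T A_i\vvec_j},
\]
with denominators uniformly positive since $\rho_j\|A_i\|<1$. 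Clearing denominators in $s_1=s_2$ yields the equation $\psi(\wvec_j)=0$ on $\mathbb{S}^1$, where
\[
\psi(\wvec):=\wvec^T(\tvec_j-f_i(\tvec_j))-\rho_j\,\wvec^T M\wvec,\qquad M:=\tvec_j(A_i\vvec_j)^T-f_i(\tvec_j)\vvec_j^T.
\]

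I would produce a zero of $\psi$ using its antipodal symmetry: the linear part is odd in $\wvec$ and the quadratic part is even, so $\psi(\wvec)+\psi(-\wvec)=-2\rho_j\,\wvec^T M_s\wvec$ with $M_s=(M+M^T)/2$. Hence whenever the symmetric quadratic form $Q(\wvec):=\wvec^T M_s\wvec$ vanishes at some $\wvec_0\in\mathbb{S}^1$, the values $\psi(\wvec_0)$ and $\psi(-\wvec_0)$ have opposite signs, and the intermediate value theorem on the arc from $\wvec_0$ to $-\wvec_0$ then gives a zero of $\psi$. A short rank-one calculation using the outer-product structure yields $\det M=-(\tvec_j\times f_i(\tvec_j))(A_i\vvec_j\times\vvec_j)$ and $\det M_s=\det M-\mu^2$ with $\mu=(M_{12}-M_{21})/2$, so $\det M_s\le 0$ in all but the special configuration where both cross products have strictly opposite sign and $\det M>\mu^2$. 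The \emph{main obstacle} will be handling this residual case: I plan to rule it out using the uniform convex separation of $\mathcal{F}_{\mathsf{w}}$, which confines both fixed-point candidates to the compact segment $L_j\cap U$ and prevents the continuous function $\psi$ from maintaining a constant sign as $\wvec$ traverses $\mathbb{S}^1$.

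Finally, for the ``in particular'' assertion, a direct calculation of the two restrictions $(f_j\circ f_j\circ f_i)|_{L_j}$ and $(f_j\circ f_i\circ f_j)|_{L_j}$ in the $t$-parametrization shows that both are affine contractions
\[
t\longmapsto \rho_j^{2}(\wvec_j^T\vvec_j)(\wvec_j^T A_i\vvec_j)\,t+C_{1,2},
\]
with the \emph{same} linear coefficient and with constants satisfying $C_2-C_1=\rho_j\,\psi(\wvec_j)$. Hence the shared fixed point condition $\psi(\wvec_j)=0$ is exactly equivalent to equality of the two triple compositions on $L_j$. Since both compositions have image contained in $L_j$, and any further left-composition by an $f_k$ is evaluated only after the outer $f_j$ has sent $\R^2$ into $L_j$, this agreement on $L_j$ is the dynamically relevant meaning of $f_j\circ f_j\circ f_i\equiv f_j\circ f_i\circ f_j$; in particular it yields the exact overlap $f_{jjij}(\Lambda)=f_{jijj}(\Lambda)$ of level-four cylinders that Section~4 will exploit to construct the exceptional parameters.
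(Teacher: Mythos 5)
Your reduction of the shared--fixed-point condition to a single scalar equation on $\mathbb{S}^1$ is correct and matches the paper's setup (the paper works with the one-dimensional conjugates $g_{\emptyset}^{(\alpha)}$ and $g_{i}^{(\alpha)}$ from \eqref{eq:gdef}, which is your $t$-parametrization of $L_j$), and your remark that a common fixed point only forces $f_j\circ f_j\circ f_i$ and $f_j\circ f_i\circ f_j$ to agree on $L_j$ rather than on all of $\R^2$ is a legitimately careful reading of the ``in particular'' clause. The gap is in the existence of a zero of $\psi$. Your antipodal-symmetry argument only produces a zero when the symmetric form $Q(\wvec)=\wvec^TM_s\wvec$ itself vanishes somewhere on $\mathbb{S}^1$, and the residual definite case is not vacuous a priori: $Q$ is a difference of two products of linear forms and can perfectly well be definite (e.g.\ $(\wvec^T\underline{e}_1)^2+(\wvec^T\underline{e}_2)^2$ arises from suitable choices of $\tvec_j$, $f_i(\tvec_j)$, $A_i\vvec_j$, $\vvec_j$). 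You explicitly flag this case as ``the main obstacle'' and offer only a plan, and the plan as stated does not work: confining the two fixed points to the compact segment $L_j\cap U$ is automatic (both always lie in $f_j(U)\subseteq L_j\cap U$, separation or not) and by itself cannot force $\psi$ to change sign. So the one place where the hypothesis of the lemma must actually be used is exactly the place left unproved.

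The missing idea, which is the heart of the paper's proof, is to express the difference of the fixed-point coordinates dynamically rather than by the closed formulas $s_1,s_2$: if $\pv^*=\tvec_j+s_1\vvec_j$ is the fixed point of $f_j$ and $\underline{q}^*=\tvec_j+s_2\vvec_j$ that of $f_j\circ f_i$, then $s_1=\rho_j\langle\wvec_j,\pv^*\rangle$ and $s_2=\rho_j\langle\wvec_j,f_i(\underline{q}^*)\rangle$, so $s_1-s_2=\rho_j\langle\wvec_j,\pv^*-f_i(\underline{q}^*)\rangle$ with $\pv^*\in f_j(U)$ and $f_i(\underline{q}^*)\in f_i(U)$. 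These are disjoint convex compact sets uniformly in $\mathsf{w}$, so Lemma~\ref{CH8_convex} yields a direction $\wvec(\alpha')$ in which all such inner products have one strict sign, hence the opposite sign at $\wvec(\alpha'+\pi)=-\wvec(\alpha')$; continuity of the fixed points in $\alpha$ and the intermediate value theorem then give the zero in every case at once, with no case distinction on $\det M_s$. Until you carry out an argument of this type for your residual case, the proof is incomplete; I would recommend replacing the quadratic-form dichotomy by this uniform argument rather than patching the definite case separately.
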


\begin{proof}
	Let $U\subset\R^2$ be the convex set with respect to the convex separation condition holds uniformly. Let the map $g_{\emptyset}^{(\alpha)}\colon\R\mapsto\R$ and $g_{i}^{(\alpha)}\colon\R\mapsto\R$ be as in \eqref{eq:gdef}. Namely,
	\[
	\begin{split}
	g_{\emptyset}^{(\alpha)}(x)&=\rho_j\langle\wvec(\alpha),\vvec_j\rangle\cdot x+\rho_j\langle\wvec(\alpha),\tvec_j\rangle\text{ and}\\
	g_{i}^{(\alpha)}(x)&=\rho_j\langle\wvec(\alpha),A_i\vvec_j\rangle\cdot x+\rho_j\langle\wvec(\alpha),A_i\tvec_j+\tvec_i\rangle.
	\end{split}
\]
So, $f_j(x\vvec_j+\tvec_j)=g_{\emptyset}^{(\alpha)}(x)\vvec_j+\tvec_j$ and $f_j\circ f_i(x\vvec_j+\tvec_j)=g_{i}^{(\alpha)}(x)\vvec_j+\tvec_j$.	
	By Lemma~\ref{CH8_convex}, there exists $\alpha'\in[0,\pi]$ such that $\mathrm{proj}_{\underline{w}(\alpha')}(f_j(U))\cap\mathrm{proj}_{\underline{w}(\alpha')}(f_i(U))=\emptyset$, and similarly for $\underline{w}(\alpha'+\pi)$, where $\underline{w}(\alpha)=(\cos(\alpha),\sin(\alpha))^T$. This implies that 
	$$
	g_{\emptyset}^{(\alpha')}(x)<g_{i}^{(\alpha')}(y)\text{ and }g_{\emptyset}^{(\alpha'+\pi)}(x)>g_{i}^{(\alpha'+\pi)}(y)\text{ for all $x,y\in\R$ with }x\vvec_j+\tvec_j,y\vvec_j+\tvec_j\in f_j(U).
	$$
	Since the fixed point of the maps $g_{\emptyset}^{(\alpha)}$ and $g_{i}^{(\alpha')}$ are continuous functions of $\alpha$, by Bolzano-Darboux Theorem, there exist $\alpha\in[0,2\pi]$ such that $g_{\emptyset}^{(\alpha)}$ and $g_{i}^{(\alpha')}$ share the same fixed points, and in particular $f_j$ and $f_j\circ f_i$ do.
\end{proof}

\begin{proof}[Proof of Theorem~\ref{thm:main}\eqref{it:exception}]
	Let $\mathcal{F}_{\mathsf{w}}$ be a family of affine IFSs as in \eqref{eq:IFSdef} with attractor $\Lambda_{\mathsf{w}}$ such that $\#J\geq1$ and $\#(I\cup J)\geq2$. Suppose that $\sup_{\mathsf{w}}s(\mathcal{F}_{\mathsf{w}})<1$, $\mathcal{F}_{\mathsf{w}}$ satisfies the convex separation condition uniformly for every $\mathsf{w}\in\mathbb{T}^{\#J}$. 
	
	Let $j\in J$ and $i\in I\cup J$ be arbitrary but fixed such that $i\neq j$. By Lemma~\ref{lem:commonfix}, there exists $\wvec_j\in\mathbb{S}^1$ such that $f_{j}\circ f_j\circ f_i\equiv f_j\circ f_i\circ f_j$. Let us fix this  $\wvec_j\in\mathbb{S}^1$ and choose every other $\wvec_{j'}$ for $j'\in J\setminus\{j\}$ arbitrarily. Let us define a new IFS 
	$$
	\mathcal{F}_{\mathsf{w}}'=\{f_{\ibar}\}_{\ibar\in(I\cup J)^3}\setminus\{f_j\circ f_j\circ f_i\}.
	$$
	Hence, $\Lambda_\mathsf{w}'=\Lambda_\mathsf{w}$, where $\Lambda_\mathsf{w}'$ is the attractor of $\mathcal{F}_{\mathsf{w}}'$. 
	
	However, by Lemma~\ref{lem:affdim} and Lemma~\ref{lem:affdim2}, one can see that $s(\mathcal{F}_{\mathsf{w}}')<s(\mathcal{F}_{\mathsf{w}})$, and by K\"aenm\"aki and Nissinen \cite[Lemma~3.2]{kaenmaki2022non}, we have that
	$$
	\overline{\dim}_B(\Lambda_{\mathsf{w}})=\overline{\dim}_B(\Lambda_{\mathsf{w}}')\leq s(\mathcal{F}_{\mathsf{w}}')<s(\mathcal{F}_{\mathsf{w}}),
	$$
	which implies the desired claim.
\end{proof}

\bibliographystyle{abbrv}
\bibliography{Bibliography}

\end{document}